\documentclass{amsart}
\usepackage[all]{xy}
\usepackage{amscd}
\usepackage[T1]{fontenc}
\usepackage{amsmath}
\usepackage{amssymb}
\usepackage{latexsym}
\usepackage{enumerate}

\begin{document}
 
\newtheorem{theorem}{Theorem}[section]
\newtheorem{proposition}[theorem]{Proposition}
\newtheorem{lemma}[theorem]{Lemma}
\newtheorem{corollary}[theorem]{Corollary}
\newtheorem{conjecture}[theorem]{Conjecture}
\newtheorem{question}[theorem]{Question}

\theoremstyle{remark}
\newtheorem{remark}[theorem]{Remark}
\newtheorem{remarks}[theorem]{Remarks}
\newtheorem{example}[theorem]{Example}
\newtheorem{examples}[theorem]{Examples}
\newtheorem{ex}[theorem]{Example}

\newcommand{\Aff}{\mathbb{A}}
\newcommand{\FF}{\mathbb{F}}
\newcommand{\GG}{\mathbb{G}}
\newcommand{\NN}{\mathbb{N}}
\newcommand{\PP}{\mathbb{P}}
\newcommand{\mfa}{\mathfrak{a}}
\newcommand{\mfb}{\mathfrak{b}}
\newcommand{\mfj}{\mathfrak{j}}
\newcommand{\mfp}{\mathfrak{p}}
\newcommand{\mfr}{\mathfrak{r}}
\newcommand{\Fq}{\FF_{q}}
\newcommand{\Fqb}{\overline{\FF}_{q}}

\newcommand{\card}[1]{\lvert#1\rvert}
\newcommand{\idest}{\textit{i. e.} }
\newcommand{\set}[2]{\left\{#1\,\mid\,#2\right\}}

\newcommand{\codim}{\mathop{\mathrm{codim}}\nolimits}
\newcommand{\cumdeg}{\mathop{\hbox{\rm{c-deg}}}\nolimits}
\newcommand{\Gal}{\mathop{\mathrm{Gal}}\nolimits}
\newcommand{\lcm}{\mathop{\mathrm{lcm}}\nolimits}
\newcommand{\Sing}{\mathop{\mathrm{Sing}}\nolimits}
\newcommand{\Tr}{\mathop{\mathrm{Tr}}\nolimits}

\renewcommand{\mod}{\mathop{\mathrm{mod}}\nolimits}


\title{On the Number of Points 
of Algebraic Sets over Finite Fields}

\author{Gilles Lachaud}
\address{Gilles Lachaud, Universit\'e d'Aix-Marseille \\
Institut de Math\'{e}matiques
de Marseille\\ Luminy case 907, F13288 Marseille cedex 9\\ France}
\email{gilles.lachaud@univ-amu.fr}

\author{Robert Rolland}
\address{Robert Rolland, Universit\'e d'Aix-Marseille \\
Institut de Math\'{e}matiques
de Marseille\\ Luminy case 907, F13288 Marseille cedex 9\\ France}
\email{robert.rolland@acrypta.fr}

\date{July 24, 2014}

\begin{abstract}
We determine upper bounds on the number of rational points of an affine or
projective algebraic set defined over an extension of a finite field by a system of polynomial equations,
including the case where the algebraic set is not defined over the finite field by itself.
A special attention is given to irreducible but not absolutely irreducible 
algebraic sets, which satisfy better bounds. We study the case of complete intersections,
for which we give a decomposition, coarser than the decomposition in irreducible components, but more directly 
related to the polynomials defining the algebraic set.
We describe families of algebraic sets having the maximum number of rational points in the affine case, 
and a large number of points in the projective case.

\medskip

\noindent Nous d\'eterminons des majorations du nombre de points d'un ensemble alg\'ebrique affine ou projectif, d\'efini
sur une extension d'un corps fini par un syst\`eme d'\'equations polynomiales, y compris
dans le cas o\`u l'ensemble alg\'ebrique n'est pas d\'efini sur le corps fini lui-m\^eme.
Une attention particuli\`ere est port\'ee aux ensemble alg\'ebriques irr\'eductibles mais non absolument
irr\'eductibles, pour lesquels nous obtenons de meilleures bornes. Nous \'etudions le cas des intersections compl\`etes,
pour lesquelles nous construisons une d\'ecomposition moins fine que la d\'ecomposition en composantes
irr\'eductibles, mais plus directement li\'ee aux polyn\^omes qui d\'efinissent l'ensemble alg\'ebrique.
Enfin, nous construisons des familles d'ensembles alg\'ebriques atteignant le nombre maximum
de points rationnels dans le cas affine, et comportant de nombreux points dans le cas projectifs.
\end{abstract}

\keywords{
algebraic set, algebraic variety, B\'ezout's Theorem, coarse decomposition, complete intersection, 
cumulative degree, degree, finite field, greatest closed subset, number of rational points, tubular set}
\subjclass[2010]{14G15,  14G05}

\maketitle


\section*{Introduction}

Let $X$ be an algebraic subset of the affine or projective space, defined over an extension of a given 
finite field ${\mathbb F}_q$. Our purpose is to give several bounds on the maximum number of points 
of $X$, with coordinates in ${\mathbb F}_q$ (unless explicitly stated, we do not assume that $X$ is 
defined over ${\mathbb F}_q$). These bounds are expressed in terms of the degree of $X$, and they are 
obtained by applying various versions of B\'ezout's Theorem. Hence, the notions of degree and cumulative 
degree are essential tools in our computations, and they are introduced in Section \ref{sec_cd}.  
We establish a general upper bound in Section \ref{sec_b} (Theorem \ref{genbound}). We improve 
this general bound if $X$ is irrational, that is, not defined over ${\mathbb F}_q$, 
by introducing the \emph{greatest $k$-closed subset} in $X$. Surprisingly, we obtain in 
this case a bound of order $q^{\dim X - 1}$ (Corollary \ref{shortbound}). 
In Section \ref{sec_rci} we assume that $X$ is relatively irreducible, and study 
the decomposition of $X$ in absolutely irreducible components of $X$. 
This decomposition leads to a better upper bound (Corollary \ref{relirrbound})  
than the general upper bound given in Section \ref{sec_b}, and also to a bound of 
order $q^{\dim X - 1}$. We also show that the set of rational points of $X$ is 
contained in the singular locus of $X$, and moreover $X(k) = \emptyset$ if $X$ is normal (Proposition \ref{singbound}).
We assume in section \ref{sec_ci} that $X$ is an (ideal-theoretic) complete intersection, 
for which an exact formula for the degree is known.
We describe a decomposition of $X$ directly related to a system $(f_{1}, \dots, f_{r})$ of 
polynomials defining $X$, namely the \emph{coarse decomposition} (Proposition \ref{GenBasic}). 
The decomposition in irreducible components is finer
than the coarse decomposition, but the later can be explicitely constructed
from $(f_{1}, \dots, f_{r})$. 
This leads to an upper bound on the number of rational points of $X$, improving the general
upper bound if every polynomial among $(f_{1}, \dots, f_{r})$ is relatively irreducible, 
but at least one is not absolutely irreducible (Proposition \ref{coarseci}).
In Section \ref{sec_tu} we construct a family of 
affine algebraic sets over ${\mathbb F}_q$ (the \emph{tubular sets}) reaching the general upper bound 
given in Section \ref{sec_b}. The corresponding projective family has also a large number of points
but does not reach the the general upper bound (Theorem \ref{nbptstub}).
It is worthwhile to precise that our results generalize and improve those previously obtained in the  
case of hypersurfaces defined over ${\mathbb F}_q$,
for which the best bounds are given in \cite{kalipe} in the affine case, and in 
\cite{serr} and \cite{sore2} in the projective case. Also note that some of our methods can be seen as 
similar, although in a more explicit and precise way, to the general approach of Heath-Brown in \cite[Th. 3]{khisbraun}.

\section{The cumulative degree}
\label{sec_cd}

Let $k$ be a field and $K$ the algebraic closure of $k$.  We are interested in the solutions in the affine space $k^{n}$ of a system
\begin{equation}
\label{systemaff}
f_{i}(T_{1}, \dots, T_{n}) = 0 \qquad (1 \leq i \leq r)
\end{equation}
with $f_{i} \in K[T_{1}, \dots, T_{n}]$, and $r \leq n$.
We are also concerned about solutions in the projective space $\PP^{n}(k)$ of a system
\begin{equation}
\label{systemproj}
f_{i}(T_{0}, \dots, T_{n}) = 0 \qquad (1 \leq i \leq r)
\end{equation}
with homogeneous polynomials $f_{i} \in K[T_{0}, \dots, T_{n}]$. These systems define 
respectively a $K$-algebraic subset $X$ in the affine space $\Aff^{n} = K^{n}$ and in the  
projective space $\PP^{n} = \PP^{n}(K)$. If $\mfa$ is an ideal of $K[T_{1}, \dots, T_{n}]$, 
the subset of zeros of $\mfa$ is denoted by $V(\mfa)$. Hence, $X = V(\mfa)$ where $\mfa$ is 
the ideal generated by $f_{1}, \dots, f_{r}$. If $S$ is a subset of $\Aff^{n}$ or $\PP^{n}$, 
the \emph{ideal of $S$}, denoted by $I(S)$, is the radical ideal of polynomials 
vanishing on $S$. Hence, $I(X)$ is the radical $\mfr(\mfa)$ of $\mfa$.

Let $Z_{1}, \dots, Z_{t}$ be the irreducible components of $X$, in such a way that
$$X = Z_{1} \cup \dots \cup Z_{t}.$$
We put $m = \dim X = \displaystyle{\max_{1 \leq i \leq t}} \dim Z_{i}$. 
Then $m \geq n - r$ since, by the Generalized Principal Ideal Theorem \cite[Ch. VIII, \S 3, Prop. 4]{BkiAC89}:
$$
\displaystyle{\min_{1 \leq i \leq t}} \dim Z_{i} \geq n - r.
$$
We denote by $\deg X$ the \emph{(usual) degree} of $X$, 
for which we refer to Fulton \cite{fult2}, 
Harris \cite{harr}, and Hartshorne \cite{hart}. 
Recall that if $X$ is of dimension $m$, then $\deg X$ is equal to 
$\card{X \cap L}$ for almost all linear varieties $L$ of complementary dimension $n - m$.  
For $0 \leq l \leq m$, put
$$
\cumdeg_{l} X = \sum_{\dim Z_{i} = l, \, 1 \leq i \leq t} \deg Z_{i}.
$$
The \emph{cumulative degree} of $X$ (Heintz \cite{hein}, Burgisser \cite{buclsh}) is
$$
\cumdeg X = \sum_{l = 0}^{m} \cumdeg_{l} X = \sum_{i = 1}^{t} \deg Z_{i}.
$$
Since \cite[Ex. 2.5.2(b)]{fult2} or \cite[Prop. 7.6(b)]{hart}:
$$
\deg(X) = \cumdeg_{m}(X),
$$
we have
$$
\deg(X) \leq \cumdeg(X),
$$
with equality if and only if $X$ is equidimensional of dimension $m$ 
(\idest every irreducible component of $X$ has dimension $m$).

There are many ways to state B\'ezout's Theorem. The more general one is the \emph{Main Theorem} and 
its \emph{refined version}, see Fulton \cite[Th. 12.3 and Ex. 12.3.1]{fult2} 
and Vogel \cite[Th. 2.1 and Cor. 2.26]{voge}. We use here three variants of B\'ezout's Theorem 
which are the more appropriate for our purposes, namely, Theorems \ref{bezout1}, \ref{fulton}, and \ref{bezout2}. 
Although they can be undoubtedly deduced from the general theory, we give in each case specific references for these statements.

\begin{theorem}[B\'ezout's Theorem, cumulative degree]
\label{bezout1}
let $Z$ be an algebraic subset, 
and $H_{1}, \dots, H_{r}$ a sequence of hypersurfaces in $\Aff^{n}$ or $\PP^{n}$. Then
$$
\cumdeg(Z \cap H_{1} \cap \ldots \cap H_{r}) \leq \cumdeg(Z) \prod_{i = 1}^{r}\deg (H_{i}).
$$
\end{theorem}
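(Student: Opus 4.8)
The plan is to prove the inequality by induction on the number $r$ of hypersurfaces, after first reducing to the case where $Z$ is irreducible and then reducing the affine case to the projective one. The reduction to irreducible $Z$ rests on the subadditivity of the cumulative degree over unions: if $Z = Z_{1} \cup \dots \cup Z_{t}$ is the decomposition into irreducible components, then every irreducible component of $Z \cap H_{1} \cap \dots \cap H_{r}$ is a component of some $Z_{i} \cap H_{1} \cap \dots \cap H_{r}$ (a component of a finite union is a maximal element among the components of the members), so that
\[
\cumdeg(Z \cap H_{1} \cap \dots \cap H_{r}) \le \sum_{i=1}^{t} \cumdeg(Z_{i} \cap H_{1} \cap \dots \cap H_{r}).
\]
Since $\cumdeg(Z) = \sum_{i} \deg Z_{i}$, it suffices to prove the bound when $Z$ is irreducible. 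For the affine case, I would pass to projective closures: denoting by $\widehat{Y}$ the intersection of the projective closures $\overline{Z}, \overline{H_{1}}, \dots, \overline{H_{r}}$, one has $Z \cap H_{1} \cap \dots \cap H_{r} = \widehat{Y} \cap \Aff^{n}$, so the affine components are exactly the affine parts of those components of $\widehat{Y}$ not lying in the hyperplane at infinity, with unchanged degrees; hence the affine cumulative degree is at most the projective one and the affine inequality follows from the projective one.

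It therefore remains to treat irreducible $Z$ in the projective space, and I would begin with the base case $r=1$, writing $H = H_{1}$. If $Z \subseteq H$, then $Z \cap H = Z$ and $\cumdeg(Z \cap H) = \deg Z \le \deg Z \cdot \deg H$ since $\deg H \ge 1$. If $Z \not\subseteq H$, then $Z \cap H$ is a proper closed subset of the irreducible variety $Z$, and by the Principal Ideal Theorem every component of it has dimension exactly $\dim Z - 1$; thus $Z \cap H$ is equidimensional and $\cumdeg(Z \cap H) = \deg(Z \cap H)$. The intersection being proper, classical B\'ezout applied to the intersection cycle $Z \cdot H = \sum_{C} m_{C}\,[C]$ (with multiplicities $m_{C} \ge 1$ and $C$ ranging over the components of $Z \cap H$) gives $\sum_{C} m_{C} \deg C = \deg Z \cdot \deg H$, whence
\[
\cumdeg(Z \cap H) = \sum_{C} \deg C \le \sum_{C} m_{C} \deg C = \deg Z \cdot \deg H.
\]

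For the inductive step I would set $Z' = Z \cap H_{1} \cap \dots \cap H_{r-1}$ and apply the $r=1$ bound, in the general (not necessarily irreducible) form secured by the subadditivity reduction, to $Z'$ and $H_{r}$, followed by the induction hypothesis applied to $Z'$:
\[
\cumdeg(Z' \cap H_{r}) \le \cumdeg(Z')\,\deg H_{r} \le \cumdeg(Z) \Bigl( \prod_{i=1}^{r-1} \deg H_{i} \Bigr) \deg H_{r}.
\]
The step requiring the most care is precisely that the intermediate sets $Z'$ are in general neither irreducible nor equidimensional and the successive intersections may be improper, so the estimate must be carried by the \emph{cumulative} degree throughout rather than by the top-dimensional degree alone; this is exactly the behaviour that the refined form of B\'ezout's Theorem (Fulton, Ex.~12.3.1) is built to control, and one could alternatively bypass the induction altogether by applying that refined inequality directly to each irreducible component $Z_{i}$ together with $H_{1}, \dots, H_{r}$ and summing.
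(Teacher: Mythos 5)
Your proof is correct, and it is worth pointing out that the paper does not actually prove Theorem \ref{bezout1} at all: its ``proof'' consists of the citations to Heintz [Th.~1] and Burgisser et al.\ [Prop.~8.28]. What you have written is essentially the standard argument underlying those references, made self-contained. The three reductions are all sound: subadditivity of $\cumdeg$ over the decomposition of $Z$ into irreducible components (every component of $Z \cap H_{1} \cap \dots \cap H_{r}$ is a component of some $Z_{i} \cap H_{1} \cap \dots \cap H_{r}$), passage from the affine to the projective setting via projective closures (components of the affine intersection are the affine traces of components of $\widehat{Y}$ not at infinity, with unchanged degrees), and the induction on $r$ through $Z' = Z \cap H_{1} \cap \dots \cap H_{r-1}$. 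You also correctly isolate the one genuinely geometric input, in the base case with $Z$ irreducible and $Z \not\subseteq H$: properness of the intersection forces pure codimension one (Krull), so $\cumdeg(Z\cap H) = \deg(Z\cap H)$, and the hypersurface section counted with multiplicities $m_{C}\geq 1$ has degree exactly $\deg Z\cdot\deg H$ (Hilbert-polynomial computation or Fulton 8.4 --- the same kind of input the paper itself invokes for hyperplane sections in Lemma \ref{hyper}). Your closing remark correctly identifies why the cumulative degree, rather than the top-dimensional degree, must carry the estimate through the possibly improper and non-equidimensional intermediate intersections; this is precisely what separates the inequality from the naive B\'ezout equality. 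What the paper's citation buys is brevity; what your argument buys is a proof whose only external ingredients are the principal ideal theorem and the degree of a proper hypersurface section of an irreducible projective variety.
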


\begin{proof}
See Heintz \cite[Th. 1]{hein}, Burgisser \& al. \cite[Prop. 8.28]{buclsh}.
\end{proof}

Theorem \ref{bezout1} shows that if $X$ is given by \eqref{systemaff} or \eqref{systemproj}, 
by taking for $Z$ the whole space, and for $H_{i}$ the hypersurface $V(f_{i})$, then
\begin{equation}
\label{bezouthyp}
\cumdeg(X) \leq \prod_{i = 1}^{r}\deg (f_{i}).
\end{equation}

\begin{example}
Consider the couple of polynomials
$$
f_{1}(T_1,T_2) = T_2(T_2 - 1), \quad f_{2}(T_1,T_2) = T_1 T_2,
$$
and let $X = V(f_{1}, f_{2})$. Then $X = Z_{1} \cup Z_{2}$, where
$Z_{1}$ is the line $T_2 = 0$ and $Z_{2}$ is the point $(0, 1)$. Hence,
$$\deg X = 1, \quad \cumdeg X = 2, \quad (\deg f_{1})(\deg f_{2}) = 4.$$
\end{example}

\section{Bounds for algebraic sets}\label{sec_b}

\subsection{General case}

Here $k = \Fq$ is the field with $q$ elements, and $K = \bar{\FF}_{q}$.

\begin{theorem}
\label{genbound}
Let $X$ be a $K$-algebraic set of dimension $m$ in $\Aff^{n}$  (resp. $\PP^{n}$). If $X$ is affine, then
$$
\card{X \cap \Fq^{n}} \leq \sum_{l = 0}^{m} \cumdeg_{l}(X) q^{l} \leq \cumdeg(X) q^{m}.
$$
If $X$ is projective, then
$$
\card{X \cap \PP^{n}(\Fq)} \leq \sum_{l = 0}^{m} \cumdeg_{l}(X) \pi_{l} \leq \cumdeg(X) \pi_{m},
$$
where we have put $\pi_{n} = \card{\PP^{n}(\Fq)} = q^{n} + \dots + 1$ for $n \geq 0$.
\end{theorem}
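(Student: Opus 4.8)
The plan is to bound the rational points component by component and then sum. Write $X = Z_1 \cup \dots \cup Z_t$ as the decomposition into irreducible components, so that
$$
\card{X \cap \Fq^{n}} \leq \sum_{i=1}^{t} \card{Z_i \cap \Fq^{n}},
$$
with the analogous inequality in the projective case. Since $\cumdeg_l(X)$ collects the degrees of those $Z_i$ with $\dim Z_i = l$, it suffices to prove the per-component estimate
$$
\card{Z_i \cap \Fq^{n}} \leq (\deg Z_i)\, q^{l}, \qquad l = \dim Z_i,
$$
and in the projective case $\card{Z_i \cap \PP^{n}(\Fq)} \leq (\deg Z_i)\, \pi_{l}$. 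Summing over $i$ and grouping by dimension then yields exactly $\sum_{l=0}^{m} \cumdeg_l(X)\, q^{l}$ (resp. $\sum_{l=0}^{m} \cumdeg_l(X)\, \pi_l$). The final inequality $\sum_l \cumdeg_l(X)\,q^l \leq \cumdeg(X)\,q^m$ is immediate because $q^l \leq q^m$ for $l \leq m$ and $\cumdeg(X) = \sum_{l=0}^m \cumdeg_l(X)$; likewise $\pi_l \leq \pi_m$.

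So everything reduces to the single-component bound, and the natural tool is a dimension count via generic linear projections together with Theorem \ref{bezout1}. In the affine case, fix an irreducible $Z$ of dimension $l$ and degree $d = \deg Z$. The rational points $Z \cap \Fq^n$ are finite in number, so I would choose a linear projection $\pi : \Aff^n \to \Aff^l$ defined over $\Fq$ whose restriction to $Z$ is a finite (generically $d$-to-one) map; the target $\Aff^l(\Fq)$ has $q^l$ points. The key geometric input is that a generic $\Fq$-linear $L$ of complementary dimension $n-l$ meets $Z$ in exactly $\deg Z$ points, so each fiber of $\pi$ over an $\Fq$-point contains at most $d$ points of $Z$ — this is where Theorem \ref{bezout1} (or the defining property of degree recalled in Section \ref{sec_cd}) enters, applied to slice $Z$ by the $n-l$ hyperplanes cutting out a fiber. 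Counting fibers gives at most $d\,q^l$ rational points. The projective case is handled identically, slicing by $n-l$ hyperplanes to reduce to a zero-dimensional set of degree at most $d$ over each of the $\pi_l$ points of $\PP^l(\Fq)$; I would induct on $l$, at each step intersecting with a rational hyperplane and invoking Theorem \ref{bezout1} to control the cumulative degree of the slice.

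The main obstacle is the non-uniformity of the fiber count: over a specific $\Fq$-rational point the fiber may fail to be transverse, may be entirely non-reduced, or (crucially, since $X$ need not be defined over $\Fq$) may consist of points none of which are $\Fq$-rational even when the fiber is large. The honest way to handle this is to argue that a fiber meeting $\Fq^n$ in more than $d$ points would force, after slicing, a zero-dimensional set of cumulative degree exceeding $d$, contradicting Theorem \ref{bezout1}; one must check that passing to a rational fiber does not inflate the degree. I expect the cleanest route is an induction on the dimension $l$: intersect $X$ with a well-chosen $\Fq$-rational hyperplane $H$, apply Theorem \ref{bezout1} to bound $\cumdeg(X \cap H)$, and relate $\card{X \cap \Fq^n}$ to the points lying in each of the $q^l$ (resp. $\pi_l$) rational fibers — the subtlety being to select $H$ so that the dimension genuinely drops on every top-dimensional component while keeping the bookkeeping on $\cumdeg_l$ consistent with the grouping by dimension.
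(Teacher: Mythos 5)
Your reduction to the single-component estimate (Proposition \ref{irrbound}) and the final regrouping by dimension are exactly what the paper does, and that part is correct. The gap is in the component bound itself, and it sits precisely where you flag a ``subtlety'' and then leave it open. First, the projection argument: a linear projection $\pi:\Aff^{n}\to\Aff^{l}$ defined over $\Fq$ need not be finite on $Z$ --- a special fiber can be positive-dimensional (think of a vertical line projected to the horizontal axis), and such a fiber may contain far more than $d$ rational points. Your proposed repair, that a fiber with more than $d$ rational points would yield after slicing a zero-dimensional set of cumulative degree exceeding $d$ contradicting Theorem \ref{bezout1}, does not work: slicing a positive-dimensional fiber does not produce a zero-dimensional set, and B\'ezout gives no contradiction. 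Over $\Fqb$ one can always move the projection to avoid this, but over $\Fq$ there may be too few rational directions, so ``generic'' is not available. The paper closes this hole with Lemma \ref{nondegenerate} (project $Z$ isomorphically into a smaller space, without decreasing the number of rational points or changing the degree, until it is nondegenerate) combined with Lemma \ref{hyper} (a nondegenerate variety meets \emph{every} hyperplane properly, with controlled degree); after that, every rational hyperplane section genuinely drops the dimension and the induction goes through.

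Second, the projective case needs a different counting device, which your sketch does not supply. There is no rational pencil of hyperplanes partitioning $\PP^{n}(\Fq)$ into $\pi_{l}$ ``fibers'' (any pencil has a common axis whose points lie on all its members), and a rational linear projection $\PP^{n}\dashrightarrow\PP^{l}$ requires a rational center disjoint from $X$, which may fail to exist for small $q$. The paper instead double-counts the incidence set of pairs $(x,H)$ with $x\in X\cap H\cap\PP^{n}(\Fq)$ and $H$ running over all $q\pi_{n-1}$ rational hyperplanes: each rational point of $X$ lies on $\pi_{n-1}$ of them, each section satisfies the inductive bound $\deg(X)\,\pi_{m-1}$ (thanks to nondegeneracy), and dividing gives $\card{X\cap\PP^{n}(\Fq)}\le \deg(X)\,q\,\pi_{m-1}<\deg(X)\,\pi_{m}$. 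Without this averaging argument, or an equivalent one, your projective case does not close. The affine case could alternatively be rescued without the nondegeneracy lemma by observing that some coordinate function is non-constant on an irreducible $Z$ of positive dimension, so the pencil $x_{1}=c$, $c\in\Fq$, works with $\cumdeg(Z\cap H_{c})\le\deg Z$ by Theorem \ref{bezout1}; but you would still need to say this explicitly rather than defer it.
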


With the help of \eqref{bezouthyp} we get

\begin{corollary}
\label{cibound}
$X$ be a $K$-algebraic set of dimension $m$ in $\Aff^{n}$  (resp. $\PP^{n}$), which is the zero set of 
a family of polynomials $(f_{1}, \dots, f_{r})$. Let $d_{i} = \deg f_{i}$. Then
$$
\card{X \cap \Fq^{n}} \leq d_{1} \dots d_{r} \ q^{m}, \, \text{resp.} \
\card{X \cap \PP^{n}(\Fq)} \leq d_{1} \dots d_{r} \, \pi_{m}.
$$
\end{corollary}

If we are only interested in the points of $X \cap \Fq^{n}$, 
one can replace in Corollary \ref{cibound} the polynomials $f_{i}$ by their reduction modulo the ideal generated 
by $T_{1}^{q} - T_{1}, \dots, T_{n}^{q} - T_{n}$, in such a way that $\deg f_{i} \leq n(q - 1)$.

If $X$ is not defined over $\Fq$, the bound of Theorem \ref{genbound} can be rough: see Corollary \ref{shortbound}.

The following proposition is a particular case of Theorem \ref{genbound}, but implies immediately this theorem. 
We define a $k$-variety as a $k$-irreducible algebraic set.

\begin{proposition}
\label{irrbound}
If $X$ is an affine (resp. projective) $K$-subvariety in $\PP^{n}$ of dimension 
$m$ in $\Aff^{n}$, resp. $\PP^{n}$, then
$$
\card{X \cap \Fq^{n}} \leq (\deg X) q^{m}, \quad
\text{resp. } \card{X \cap \PP^{n}(\Fq)} \leq (\deg X) \pi_{m}.
$$
\end{proposition}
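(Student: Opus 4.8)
The plan is to establish the bound by induction on the dimension $m$, cutting $X$ by hyperplanes and controlling the degree of each slice by means of B\'ezout's Theorem (Theorem \ref{bezout1}). Since a hyperplane section of an irreducible variety is in general reducible, I would prove by the same induction the slightly more general inequality $\card{Y \cap \Fq^{n}} \le \cumdeg(Y)\,q^{\dim Y}$ for an \emph{arbitrary} affine $K$-algebraic set $Y$ (and its projective analogue with $\pi_{\dim Y}$); the Proposition is then the case $Y = X$ irreducible, for which $\cumdeg(X) = \deg X$. At each level of the induction the passage from the general bound to the irreducible case is immediate: if $Y = Z_{1} \cup \dots \cup Z_{t}$ with the $Z_{i}$ irreducible of the same dimension, then $\card{Y \cap \Fq^{n}} \le \sum_{i} \card{Z_{i} \cap \Fq^{n}}$ while $\cumdeg(Y) = \sum_{i} \deg Z_{i}$, so it suffices to treat an irreducible $X$ at each step. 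The base case $m = 0$ is clear: $X$ is then a finite reduced set and $\deg X$ is its number of points, whence $\card{X \cap \Fq^{n}} \le \deg X = (\deg X)q^{0}$, and likewise $\le (\deg X)\pi_{0}$ projectively.

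For the affine inductive step, let $X$ be irreducible of dimension $m \ge 1$. Since $\dim X \ge 1$, not all coordinate functions are constant on $X$, so after relabeling we may assume $T_{n}$ is nonconstant on $X$. Then $X \not\subseteq H_{a}$ for every $a \in \Fq$, where $H_{a} = \{T_{n} = a\}$, and consequently every nonempty slice $X \cap H_{a}$ has all its irreducible components of dimension exactly $m - 1$. By Theorem \ref{bezout1}, $\cumdeg(X \cap H_{a}) \le \cumdeg(X)\deg(H_{a}) = \deg X$. The hyperplanes $H_{a}$, $a \in \Fq$, partition $\Fq^{n}$, so applying the inductive bound in dimension $m - 1$ to each slice gives
$$
\card{X \cap \Fq^{n}} = \sum_{a \in \Fq} \card{(X \cap H_{a}) \cap \Fq^{n}} \le \sum_{a \in \Fq} \cumdeg(X \cap H_{a})\, q^{m-1} \le q \cdot (\deg X)\,q^{m-1} = (\deg X)\,q^{m}.
$$

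For the projective inductive step, let $X \subseteq \PP^{n}$ be irreducible of dimension $m \ge 1$ (the case $X = \PP^{n}$ being trivial). I would choose an $\Fq$-rational hyperplane $H$ with $X \not\subseteq H$, which exists because the rational hyperplanes containing $X$ form a proper $\Fq$-linear subspace of the dual space and such a subspace cannot contain all of its rational points. Writing $\PP^{n}(\Fq) = H(\Fq) \sqcup (\PP^{n} \setminus H)(\Fq)$ and identifying $\PP^{n} \setminus H$ with $\Aff^{n}$, we obtain a disjoint decomposition $X(\Fq) = (X \cap H)(\Fq) \sqcup (X \setminus H)(\Fq)$, where $X \setminus H$ is a closed affine subvariety of $\Aff^{n}$ of dimension $m$ whose projective closure is $X$, hence of degree $\deg X$. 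The already-established affine case yields $\card{(X \setminus H)(\Fq)} \le (\deg X)\,q^{m}$, while $X \cap H$ is equidimensional of dimension $m - 1$ with $\cumdeg(X \cap H) \le \deg X$ by Theorem \ref{bezout1}, so the inductive hypothesis gives $\card{(X \cap H)(\Fq)} \le (\deg X)\,\pi_{m-1}$. Adding these and using $q^{m} + \pi_{m-1} = \pi_{m}$ produces $\card{X \cap \PP^{n}(\Fq)} \le (\deg X)\,\pi_{m}$.

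The delicate points I expect to be the main obstacle are, first, guaranteeing at each step a slicing hyperplane that is simultaneously $\Fq$-rational, does not contain $X$, and meets $X$ in a set of pure codimension one; and second, the bookkeeping that the section's cumulative degree stays bounded by $\deg X$, which is exactly where B\'ezout's Theorem enters. Rationality and the nonconstant-coordinate device are automatic in the affine case, but in the projective case one must verify the existence of a rational hyperplane avoiding $X$, and the purity of the section rests on the irreducibility of $X$ together with the fact that intersecting an irreducible variety with a hypersurface not containing it drops the dimension by exactly one; this is precisely why the statement is phrased for varieties.
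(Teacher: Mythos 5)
Your proof is correct, but it takes a genuinely different route from the paper's in both halves. For the affine step, the paper first reduces to the case where $X$ is nondegenerate by repeatedly projecting away from a hyperplane containing $X$ (Lemma \ref{nondegenerate}), and then uses the fact that a hyperplane section of a nondegenerate variety is equidimensional of codimension one \emph{and of the same degree} (Lemma \ref{hyper}); you dispense with both lemmas by strengthening the induction hypothesis to the cumulative-degree bound $\card{Y \cap \Fq^{n}} \leq \cumdeg(Y)\, q^{\dim Y}$ and controlling each slice $X \cap H_{a}$ with B\'ezout's Theorem \ref{bezout1} --- in effect you prove Theorem \ref{genbound} and Proposition \ref{irrbound} simultaneously, whereas the paper deduces the former from the latter. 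For the projective step, the paper double-counts the incidence set of pairs $(x, H)$ over \emph{all} $\Fq$-rational hyperplanes, which yields the marginally sharper bound $(\deg X)\, q\, \pi_{m-1} = (\deg X)(\pi_{m} - 1)$; you instead fix a single rational hyperplane $H$ with $X \not\subseteq H$ and split $X(\Fq)$ into $(X \setminus H)(\Fq)$ and $(X \cap H)(\Fq)$, obtaining exactly $(\deg X)\pi_{m}$ from $q^{m} + \pi_{m-1} = \pi_{m}$. Both routes are valid; yours is more elementary (no projections, no incidence correspondence) at the cost of two small existence claims: that some coordinate is nonconstant on $X$ when $\dim X \geq 1$, which is fine, and that a nonempty $X$ is not contained in every $\Fq$-rational hyperplane. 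Your justification of the latter (``such a subspace cannot contain all of its rational points'') is garbled as stated; the correct argument is that the $\Fq$-rational linear forms span the full dual space over $K$, so if all of them vanished on $X$ then every linear form would, forcing $X = \emptyset$. With that repair the argument is complete.
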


\begin{proof}[Proof of Theorem \ref{genbound}] We assume that $X$ is affine, 
the argument being similar if $X$ is projective. Let $Z_{1}, \dots, Z_{t}$ be the 
irreducible components of $X$. Since
$$
\card{X \cap \Fq^{n}} \leq \sum_{l = 0}^{m} \sum_{\dim Z_{i} = l} \card{Z_{i} \cap \Fq^{n}},
$$
we have, by Prop. \ref{irrbound}
$$
\card{X \cap \Fq^{n}} \leq \sum_{l = 0}^{m} \sum_{\dim Z_{i} = l} \deg(Z_{i}) q^{l} =
\sum_{l = 0}^{m} \cumdeg_{l}(X) q^{l}.
$$
\end{proof}

It remains to prove Proposition \ref{irrbound}. If $X$ is defined over $\Fq$, 
these results are proved in \cite[Prop. 2.3]{lach2} and \cite[Prop. 12.1]{ghla} 
(the hypothesis of equidimensionality must be added in the statements), 
and also in \cite[Lemma 3.1]{frhaja}. We provide here a complete proof for reader's convenience.

A $K$-subvariety $X \subset \Aff^{n}$  (resp. $\PP^{n}$) is called \emph{nondegenerate} 
if it does not lie in any hyperplane, \idest if the $K$-linear subvariety generated 
by $X$ is equal to the whole space. We show in the next result that 
enumeration problems for the number of points of general $K$-subvarieties can 
be reduced to nondegenerate ones, whether they are rational over $\Fq$ or not.

\begin{lemma}
\label{nondegenerate}
Let $X$ be a $K$-subvariety in $\PP^{n}$, which is not a linear variety. 
There is a projection $\PP^{n} \longrightarrow \PP^{r}$ inducing an isomorphism of 
$X$ onto a nondegenerate $K$-subvariety $X' \subset \PP^{r}$, and
$$
\deg X' = \deg X, \quad
\card{X \cap \PP^{n}(\Fq)} \leq \card{X' \cap \PP^{r}(\Fq)}.
$$
The same result holds for a $K$-subvariety in an affine space.
\end{lemma}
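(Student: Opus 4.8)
The plan is to project $X$ onto its own linear span. Write $V = \langle X \rangle \subset \PP^{n}$ for the smallest $K$-linear subvariety containing $X$, and put $r = \dim V$; thus $X \subseteq V$ and $X$ spans $V$. If I can produce a linear projection $\pi \colon \PP^{n} \dashrightarrow \PP^{r}$, \emph{defined over $\Fq$}, whose center $C$ (a linear subvariety of dimension $n-r-1$) is disjoint from $V$, then $\pi$ is a morphism on $V \supseteq X$, and $\pi|_{V} \colon V \to \PP^{r}$ is injective; being projective-linear with $\dim V = r = \dim \PP^{r}$, it is then an isomorphism onto $\PP^{r}$. Setting $X' = \pi(X)$, the restriction $\pi|_{X}\colon X \to X'$ is an isomorphism, and since $X$ spans $V$ and $\pi|_{V}$ is an isomorphism, $X'$ spans $\PP^{r}$, i.e. $X'$ is nondegenerate.

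The delicate point, and the one I expect to be the main obstacle, is to choose such a center $C$ rational over $\Fq$ even though $V$ itself need not be defined over $\Fq$. I claim a coordinate projection always works. Let $\tilde V \subset K^{n+1}$ be the $(r+1)$-dimensional subspace corresponding to $V$, and let $\phi \colon K^{n+1} \to Q = K^{n+1}/\tilde V$ be the quotient map, so $\dim_{K} Q = n - r$. The images $\phi(e_{0}), \dots, \phi(e_{n})$ of the standard basis span $Q$, hence some $n - r$ of them, say $\phi(e_{i_{1}}), \dots, \phi(e_{i_{n-r}})$, form a $K$-basis of $Q$. Let $j_{0}, \dots, j_{r}$ be the complementary indices and take $\pi = [\,T_{j_{0}} : \dots : T_{j_{r}}\,]$, with center $C = \{ T_{j_{0}} = \dots = T_{j_{r}} = 0 \}$. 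The linear subspace underlying $C$ is spanned by $e_{i_{1}}, \dots, e_{i_{n-r}}$, which $\phi$ maps isomorphically onto $Q$; hence it meets $\tilde V = \ker \phi$ only in $0$, that is $C \cap V = \emptyset$. This $\pi$ is visibly defined over $\Fq$, and this production of an $\Fq$-rational center missing a possibly irrational $V$ is the crux of the matter.

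It remains to check the two numerical assertions. A generic linear subvariety $\Lambda$ of codimension $\dim X$ meets $X$ in $\deg X$ points, and since $X \subseteq V$ we have $X \cap \Lambda = X \cap (\Lambda \cap V)$ with $\Lambda \cap V$ a generic linear section of $V$; applying the projective-linear isomorphism $\pi|_{V}$ (which carries generic linear sections to generic linear sections and preserves their cardinality) gives $\deg X' = \deg X$. For the point count, $\pi$ is given by $\Fq$-rational forms and is defined at every point of $X$ (as $X \cap C = \emptyset$), so it sends $X \cap \PP^{n}(\Fq)$ into $X' \cap \PP^{r}(\Fq)$; injectivity of $\pi|_{X}$ then yields $\card{X \cap \PP^{n}(\Fq)} \leq \card{X' \cap \PP^{r}(\Fq)}$. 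One obtains only an inequality, not an equality, precisely because the inverse isomorphism $X' \to X$ need not be defined over $\Fq$ when $V$ is not. The affine statement follows by the same argument with linear forms replaced by affine-linear ones (equivalently, by passing to the projective closure and its span), the coordinate-projection construction being identical.
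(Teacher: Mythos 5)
Your proof is correct and follows essentially the same strategy as the paper's: project $X$ away from an $\Fq$-rational linear center disjoint from the linear span of $X$, so that the projection restricts to an isomorphism on $X$, preserves the degree, and carries $\Fq$-points to $\Fq$-points, yielding the stated inequality. The only difference is that you do this in a single step, choosing a coordinate center complementary to the span via your quotient-space argument, whereas the paper projects from one point at a time onto a hyperplane (after normalizing the equation of the containing hyperplane) and iterates until the image is nondegenerate; your one-step version is, if anything, slightly more explicit on the crucial point that the center can be taken rational over $\Fq$ even when the span of $X$ is not.
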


\begin{proof}
If $X$ is nondegenerate there is nothing to prove. Suppose that $X$ is included in 
a hyperplane $H \subset \PP^{n}$. We can, and will, assume that $H$ is given by
$$
T_{n} = l(T_{0}, \dots, T_{n - 1}),
$$
with a linear form $l$ with coefficients in $K$. Let
$$\varphi(T_{0} : \ldots : T_{n}) = (T_{0} : \ldots : T_{n - 1})$$
be the projection from $(0 : \ldots : 0 : 1)$. The inverse map $\psi : \PP^{n - 1} \longrightarrow H$ of the 
restriction of $\varphi$ to $H$ is
$$
\psi : (T_{0} : \ldots : T_{n - 1}) \mapsto
(T_{0} : \ldots : T_{n - 1} : l(T_{0}, \ldots, T_{n - 1})).
$$
If $X$ is defined by
$$
f_{i}(T_{0}, \dots, T_{n}) = 0 \quad (1 \leq i \leq r),
$$
then $X' \subset \PP^{n - 1}$ is defined by
$$
f_{i}(T_{1}, \ldots, T_{n - 1}, l(T_{1}, \ldots, T_{n - 1}))  = 0 \quad (1 \leq i \leq r),
$$
and $\varphi$ defines a $K$-isomorphism from $X$ onto $X'$. 
Now $\deg X' = \deg X$ because $\varphi$ is a projection \cite[Ex. 18.16, p. 234]{harr}, 
and since $\varphi$ maps $\PP^{n}(\Fq)$ onto $\PP^{n - 1}(\Fq)$, 
we have $\card{X \cap \PP^{n}(\Fq)} \leq \card{X' \cap \PP^{n - 1}(\Fq)}$. 
If $X'$ is nondegenerate, we are done. In the opposite, we repeat the construction, 
and this comes to an end by infinite descent. The proof is the same for 
subvarieties in affine spaces.
\end{proof}

\begin{remark}
Since, \textit{a priori}, $\psi$ does not map  $\PP^{n - 1}(\Fq)$ into $\PP^{n}(\Fq)$, 
equality has no reason for being in the proposition.
\end{remark}

\begin{lemma}
\label{hyper}
If $X$ is a nondegenerate subvariety of dimension $m$ in $\Aff^{n}$ (resp. in $\PP^{n}$), 
if $H$ is a hyperplane, and if $X \cap H \neq \emptyset$, then $X$ and $H$ \emph{intersect properly},  
that is, $X \cap H$ is equidimensional of dimension $m - 1$. Moreover, $\deg(X \cap H) = \deg(X)$. 
\end{lemma}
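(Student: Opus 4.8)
The plan is to prove the two assertions separately, handling both the affine and projective settings by reducing the affine case to the projective one via projective closures; the proper-intersection argument is in fact uniform across the two. The one geometric input is nondegeneracy: because $X$ lies in no hyperplane, $X \not\subseteq H$, so if $\ell$ is the linear form cutting out $H$, its image $\bar\ell$ in the (homogeneous) coordinate ring $A$ of $X$ is nonzero. Everything else is commutative algebra applied to $A$.

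For the proper-intersection claim I would argue purely ring-theoretically. Since $X$ is a variety, $A$ is a domain, so the nonzero $\bar\ell$ is a nonzerodivisor and $X \cap H = V(\bar\ell)$ has coordinate ring $A/\bar\ell A$. Krull's principal ideal theorem bounds the height of every minimal prime over $(\bar\ell)$ by $1$, and since $\bar\ell \neq 0$ in the domain $A$, no such prime can have height $0$; hence each has height exactly $1$. As $A$ is an affine (graded) domain it is catenary and equidimensional, so a prime of height $1$ cuts out a component of dimension $m-1$. Therefore every irreducible component of $X \cap H$ has dimension $m-1$, which is the asserted equidimensionality. Projectively, $X \cap H \neq \emptyset$ is automatic for $m \geq 1$ by the projective dimension theorem; the hypothesis $X \cap H \neq \emptyset$ is only needed affinely, to exclude a hyperplane missing $X$ altogether.

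For the degree, one inequality is immediate: applying Theorem \ref{bezout1} to $Z = X$ and the single hypersurface $H$, and using that $X$ and $X \cap H$ are equidimensional (so cumulative and usual degrees coincide), gives $\deg(X \cap H) = \cumdeg(X \cap H) \leq \cumdeg(X)\deg(H) = \deg X$. The reverse inequality is the heart of the matter, and I would obtain it from the Hilbert polynomial. The nonzerodivisor $\bar\ell$ yields the exact sequence of graded $A$-modules
$$
0 \longrightarrow A(-1) \xrightarrow{\,\bar\ell\,} A \longrightarrow A/\bar\ell A \longrightarrow 0,
$$
so that $P_{A/\bar\ell A}(d) = P_A(d) - P_A(d-1)$. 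Since $P_A(d) = (\deg X)\,d^{m}/m! + O(d^{m-1})$, the difference has degree $m-1$ and the degree read off from its leading coefficient is again $\deg X$.

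I expect this last equality to be the main obstacle, and it is where I would be most careful, because what the Hilbert polynomial records is the degree of the section counted with multiplicity — equivalently, the degree of the intersection cycle $H \cdot X$, which is $\deg H \cdot \deg X = \deg X$ by the refined B\'ezout theorem. For a hyperplane tangent to $X$ the reduced section can have strictly smaller degree, so the equality must be read in this sense, or else stated for the generic (transverse) hyperplane, for which the reduced and scheme-theoretic sections agree. The affine case I would then deduce by replacing $X$ and $H$ by their projective closures $\bar X$ and $\bar H$, with $\deg \bar X = \deg X$; the only delicate point is to ensure that the degree-carrying components of $\bar X \cap \bar H$ are not absorbed into the hyperplane at infinity, which is precisely where the hypothesis $X \cap H \neq \emptyset$ must be exploited.
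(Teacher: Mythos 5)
Your argument is correct, but note that the paper does not actually prove this lemma: it disposes of it by citing Hartshorne (Ex.~I.1.8 and Prop.~I.7.6(d)) and Harris (Ex.~11.6). What you have written is essentially the proof that lives in those references --- Krull's principal ideal theorem applied to the nonzerodivisor $\bar\ell$ in the coordinate domain of $X$ (together with the catenary/dimension formula for affine domains) for equidimensionality, and the exact sequence $0 \to A(-1) \to A \to A/\bar\ell A \to 0$ giving $P_{A/\bar\ell A}(d) = P_A(d) - P_A(d-1)$ for the degree --- so you have supplied the details the paper omits rather than found a different route. Two of your caveats deserve emphasis, because they are defects of the statement itself. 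First, as you correctly observe, the Hilbert-polynomial computation yields the degree of the scheme $X\cap H$ (equivalently of the cycle $H\cdot X$), not of the reduced set: a tangent line meets a smooth plane conic in a single reduced point, so $\deg(X\cap H)=\deg X$ must be read with intersection multiplicities or for generic $H$. Second, in the affine case even the multiplicity-counted equality can fail, because components of $\overline{X}\cap\overline{H}$ may lie entirely at infinity (take $X: xy=1$ and $H: x=1$ in $\Aff^{2}$, where the intersection is one point but $\deg X=2$); the hypothesis $X\cap H\neq\emptyset$ does not prevent this, contrary to what your last sentence suggests, so affinely only the inequality $\deg(X\cap H)\leq\deg X$ survives. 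Neither issue harms the paper: the sole use of the lemma, in the proof of Proposition \ref{irrbound}, needs only equidimensionality together with $\sum_i\deg Z_i\leq\deg X$ over the components $Z_i$ of the reduced intersection, and that inequality holds in both settings (it also follows from Theorem \ref{bezout1}, as you point out).
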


\begin{proof}
See Hartshorne \cite[Ex. I.1.8]{hart} 
or Harris \cite[Ex. 11.6]{harr}. About the degree, see \cite[Prop. I.7.6(d)]{hart}.
\end{proof}

\begin{proof}[Proof of Proposition \ref{irrbound}]
By induction on $m$. If $m = 0$ then
$$\card{X(\Fq)} \leq \card{X(\Fqb)} = \deg X.$$
The proposition is obvious if $X$ is a linear variety. Otherwise, we can 
assume by Lemma \ref{nondegenerate} that $X$ is nondegenerate. 
If $H$ is a hyperplane, then $X \cap H$ is equidimensional of dimension $m - 1$ and of degree $d$  
by Lemma \ref{hyper}. We denote by $Z_{1} \dots, Z_{t}$ the irreducible components of $X \cap H$.

a) The proof is straightforward if $X$ is affine. By the induction hypothesis,
$$
\card{Z_{i} \cap \Fq^{n}} \leq \deg(Z_{i}) q^{m - 1}.
$$
and
$$
\card{X \cap H \cap \Fq^{n}} \leq  \sum_{i = 1}^{t} \deg(Z_{i}) q^{m - 1} = \deg(X) q^{m - 1}.
$$
Denote now by $H_{c}$ the hyperplane $x_{1} = c$. If $c \in \Fq$, we deduce from the preceding inequality
$$
\card{X \cap H_{c} \cap \Fq^{n}} \leq \deg(X) q^{m - 1}.
$$
Since
$$
X = \bigcup_{c \in \Fq} [X \cap H_{c}],
$$
we get the result is proved if $X$ is affine.\\
b) Assume $X$ projective. Let $\GG_{n - 1}$ be the variety of hyperplanes 
in $\PP^{n}$, and $T$ the \emph{incidence correspondence} \cite[\S \ 6.12]{harr} defined as
$$
T = \set{(x, H) \in X \times \GG_{n - 1}}{x \in X \cap H}.
$$
Although $T$ is not defined over $\Fq$, we define, by abuse of notation
$$
T(\Fq) = \set{(x, H) \in (X \cap \PP^{n}(\Fq)) \times \GG_{n - 1}(\Fq)}
{x \in X \cap H \cap \PP^{n}(\Fq)}.
$$
We get a diagram of sets with two projections
$$
\xymatrix{
& T(\Fq) \ar[dl]_{p_{1}} \ar[dr]^{p_{2}}  \\
X \cap \PP^{n}(\Fq) & & \GG_{n - 1}(\Fq)}
$$
If $x \in X \cap \PP^{n}(\Fq)$ then $p_{1}^{-1}(x)$ is in bijection with the set 
of hyperplanes $H$ in $\GG_{n - 1}(\Fq)$ containing $x$; hence $\card{p_{1}^{-1}(x)} = \pi_{n - 1}$ and
\begin{equation}
\label{T1}
\card{T(\Fq)} = \pi_{n - 1} \card{X \cap \PP^{n}(\Fq) }.
\end{equation}
On the other hand, if $H \in \GG_{n - 1}(\Fq)$, then $p_{2}^{-1}(H)$ is in 
bijection with the intersection $X \cap H \cap \PP^{n}(\Fq)$, hence
\begin{equation}
\label{T2}
\card{T(\Fq)} = \sum_{H} \card{X \cap H \cap \PP^{n}(\Fq)},
\end{equation}
where $H$ runs over the whole of $\GG_{n - 1}(\Fq)$. By the induction hypothesis, we see as in the affine case that
$$\card{X \cap H \cap \PP^{n}(\Fq)} \leq \deg(X) \pi_{m - 1}.$$
Since $\card{\GG_{n - 1}(\Fq)} = q \pi_{n - 1}$, we get from \eqref{T2} and the preceding inequality
$$
\card{T(\Fq)} \leq \deg(X) q \pi_{n - 1} \pi_{m - 1},
$$
we deduce from \eqref{T1} that $\card{X \cap \PP^{n}(\Fq)} \leq d q \pi_{m - 1} < d \pi_{m}$, 
and the result is proved if $X$ is projective.
\end{proof}

\subsection{Irrational subsets}

Here $k$ is a field and $K = \bar{k}$.

One can improve the preceding results for the number of $k$-rational points of an \emph{irrational} algebraic subset, 
that is, a Zariski closed subset of $\Aff^{n}$ or $\PP^{n}$ which is not defined over $k$. Let $k'$ 
be a Galois extension of $k$, with $G = \Gal(k'/k)$ and $s = [k':k]$. If $\sigma \in G$, we put
$$
f^{\sigma}(T_{1},\ldots, T_{n}) = \sum_{\alpha} c_{\alpha}^{\sigma} T^{\alpha}
\quad \text{if} \quad
f(T_{1},\ldots, T_{n}) = \sum_{\alpha} c_{\alpha} X^{\alpha}Ê\in k'[T_{1},\ldots, T_{n}],
$$
in such a way that $[f(x)]^{\sigma} = f^{\sigma}(x^{\sigma})$ for every $x \in \Aff^{n}$. 
Then $f \mapsto f^{\sigma}$ is an automorphism of the algebra $k'[X_{1},\ldots, X_{n}]$. 
If $\mfa$ is an ideal of $k'[T_{1},\ldots, T_{n}]$, we define
$$
\mfb = \sum_{\sigma \in G} \mfa^{\sigma}.
$$
Since $\mfb^{\sigma} = \mfb$ for every $\sigma \in G$, there is, by 
Galois descent \cite[Ch. V, \S 10, Prop. 6]{BkiAlg2En}, a $k$-structure on $\mfb$, 
that is, an ideal $\mfb_{k} \subset \mfb$ in $k[T_{1},\ldots, T_{n}]$ such that the homomorphism
$$
\begin{CD}
\mfb_{k} \otimes_{k} k' & @>>> & \mfb
\end{CD}
$$
is an isomorphism. Then
$$
\mfb = \mfb_{k}B, \quad \mfb_{k} = \mfb \cap A,
$$
and $\mfb$ is the \emph{smallest ideal of $B$ containing $\mfa$ defined over $k$}.

\begin{remark}
A family of generators of $\mfb_{k}$ can be deduced from a set of 
generators $\{f_{1}, \dots, f_{r}\}$ of $\mfa$ in the following way. The family
$$f_{1}, \dots, f_{r}, \dots, f_{1}^{\sigma}, \dots, f_{r}^{\sigma}, \dots$$
generates $\mfb$. Let $(\xi_{1}, \dots \xi_{s})$ a basis of $k'$ over $k$. 
There is a unique family $(g_{ij})$ of polynomials in $k[T_{1},\ldots, T_{n}]$ such that
\begin{equation}
\label{basech1}
f_{i}^{\sigma}(T_{1},\ldots, T_{n}) =
\sum_{j = 1}^{s} \xi_{j}^{\sigma} g_{ij}(T_{1},\ldots, T_{n}), \quad
1 \leq i \leq r, \sigma \in G.
\end{equation}
Let $(\eta_{1}, \dots \eta_{s})$ be the basis of $k'$ dual to $(\xi_{1}, \dots \xi_{s})$, in such a way that
$$
\Tr_{k'/k}(\xi_{i} \eta_{j}) =
\sum_{\sigma \in G} \xi_{i}^{\sigma} \eta_{j}^{\sigma} = \delta_{ij} \quad
1 \leq i,j \leq s.
$$
Then
\begin{equation}
\label{basech2}
g_{ij}(T_{1},\ldots, T_{n}) =
\sum_{\sigma \in G} \eta_{j}^{\sigma} f_{i}^{\sigma}(T_{1},\ldots, T_{n}), \quad
1 \leq i,j \leq s.
\end{equation}
The two formulas \eqref{basech1} and \eqref{basech2} show that $(g_{ij})$ is a family of generators 
of $\mfb$ in $k[T_{1},\ldots, T_{n}]$, that is, a family of generators of $\mfb_{k}$. 
\end{remark}

Let $X = V(\mfa)$ and $X^{\sigma} = V(\mfa^{\sigma})$. We put
$$
Q_{k'/k}(X) = V(\mfb_{k}) = \bigcap_{\sigma \in G}ÊX^{\sigma}.
$$
The algebraic subset $Q_{k'/k}(X)$ depends only on $X$ and not of $\mfa$ : it is 
the \emph{greatest $k$-closed subset in $X$}. If $\mfj = \mfr(\mfb)$ is the 
ideal of $Q_{k'/k}(X)$, then $\mfj_{k} = \mfj \cap A$ is a radical ideal, 
and $\mfj = \mfj_{k}B$ \cite[p. 22--24]{borel}. Moreover $\codim Q_{k'/k}(X) \leq s \codim X$.

\begin{lemma}
\label{rational}
Let $X$ be an algebraic subset of $\Aff^{n}$, defined over a Galois extension $k'$ of $k$, and $Y = Q_{k'/k}(X)$. Then
$$Y(k) = X \cap k^{n}.$$ 
\end{lemma}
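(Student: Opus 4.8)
The plan is to prove the equality of sets by establishing the two inclusions separately, relying on the intersection description $Y = Q_{k'/k}(X) = \bigcap_{\sigma \in G} X^{\sigma}$ recorded just above the statement, together with the observation that a point with coordinates in $k$ is invariant under the Galois action. Since $Y = V(\mfb_{k})$ is defined over $k$, the symbol $Y(k)$ means $Y \cap k^{n}$, so the target identity is $Y \cap k^{n} = X \cap k^{n}$, and I would reduce it to this elementary set-theoretic form at the outset.

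First I would dispose of the easy inclusion $Y \cap k^{n} \subseteq X \cap k^{n}$. Because $\mathrm{id} \in G$ and $X = X^{\mathrm{id}}$ is one of the factors of the intersection, we have $Y = \bigcap_{\sigma \in G} X^{\sigma} \subseteq X$; intersecting with $k^{n}$ gives the claim immediately, with no further input needed.

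For the reverse inclusion $X \cap k^{n} \subseteq Y \cap k^{n}$, I would take an arbitrary $x \in X \cap k^{n}$ and argue that $x$ lies in every $X^{\sigma}$. The point here is that since the coordinates of $x$ lie in $k$ and each $\sigma \in G$ fixes $k$ pointwise, one has $x^{\sigma} = x$ for all $\sigma$. Fixing $\sigma$ and taking an arbitrary $g \in \mfa^{\sigma}$, write $g = f^{\sigma}$ with $f \in \mfa$. Then the transformation formula $[f(x)]^{\sigma} = f^{\sigma}(x^{\sigma})$ recalled in the text, combined with $x^{\sigma} = x$, yields $g(x) = f^{\sigma}(x) = f^{\sigma}(x^{\sigma}) = [f(x)]^{\sigma}$. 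As $x \in X = V(\mfa)$ forces $f(x) = 0$, we get $g(x) = 0$; since $g$ was arbitrary in $\mfa^{\sigma}$ this shows $x \in V(\mfa^{\sigma}) = X^{\sigma}$, and since $\sigma$ was arbitrary, $x \in \bigcap_{\sigma \in G} X^{\sigma} = Y$. Hence $x \in Y \cap k^{n} = Y(k)$, which closes the argument.

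I do not expect a genuine obstacle in this lemma: the only subtlety is keeping the two Galois actions straight—the action $f \mapsto f^{\sigma}$ on coefficients versus the action $x \mapsto x^{\sigma}$ on coordinates—and this is precisely what the identity $[f(x)]^{\sigma} = f^{\sigma}(x^{\sigma})$ encodes, so the whole proof collapses to the observation that $k$-rational points are Galois-fixed. The heavier structural facts (the existence of the $k$-ideal $\mfb_{k}$ via Galois descent, and the identification $V(\mfb_{k}) = \bigcap_{\sigma} X^{\sigma}$) are already available from the preceding discussion, so I would invoke them rather than reprove them.
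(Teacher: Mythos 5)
Your proof is correct and follows essentially the same route as the paper: the reverse inclusion is immediate from $Y \subseteq X$, and the forward inclusion comes from the fact that a $k$-rational point is Galois-fixed, so $f^{\sigma}(x) = f^{\sigma}(x^{\sigma}) = [f(x)]^{\sigma} = 0$ for all $f \in \mfa$ and $\sigma \in G$. The only cosmetic difference is that you phrase this via the intersection $\bigcap_{\sigma} X^{\sigma}$ while the paper phrases it via vanishing on the ideal sum $\mfb = \sum_{\sigma} \mfa^{\sigma}$; these are the same statement since $V(\sum_{\sigma} \mfa^{\sigma}) = \bigcap_{\sigma} V(\mfa^{\sigma})$.
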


\begin{proof}
If $x \in X \cap k^{n}$, then $f(x) = 0$ for every $f \in \mfa$ and every $\sigma \in G$, 
hence, $f(x) = 0$ for every $f \in \mfb$, hence, $f \in Y(k^{n})$. The reciprocal is immediate.
\end{proof}

\begin{theorem}[B\'ezout's Theorem, general version]
\label{fulton}
Let there be given $r$ equi\-di\-men\-sio\-nal subvarieties $X_{1}, \dots, X_{r}$ of $\PP^{n}$. Then 
$$
\deg (X_{1} \cap \ldots \cap X_{r}) \leq \deg (X_{1}) \dots \deg (X_{r}).$$
\end{theorem}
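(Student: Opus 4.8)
The plan is to reduce the intersection of the $r$ varieties to a single intersection of one variety with a linear subspace, and then to invoke Theorem \ref{bezout1}, for which the cutting hyperplanes carry degree $1$ and so contribute nothing to the product. Concretely, I would realize $\PP^{n}$ as $r$ pairwise complementary linear subspaces of a large projective space $\PP^{N}$ with $N + 1 = r(n+1)$, placing $X_{i}$ in the $i$-th block of $n+1$ coordinates, and form the $r$-fold join $J = J(X_{1}, \dots, X_{r})$, that is, the union of the $(r-1)$-planes spanned by the $r$-tuples $(p_{1}, \dots, p_{r}) \in X_{1} \times \dots \times X_{r}$.

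The first key step is a set-theoretic identification. Writing a point of $\PP^{N}$ as $[z^{(1)} : \dots : z^{(r)}]$ with $z^{(i)} \in K^{n+1}$, a point of $J$ has $i$-th block equal to $\mu_{i} x^{(i)}$ for some scalar $\mu_{i}$ and some $x^{(i)}$ over $X_{i}$; the small diagonal $\Delta = \{z^{(1)} = \dots = z^{(r)}\} \cong \PP^{n}$ then forces all these blocks to represent one and the same point of $\PP^{n}$ lying in every $X_{i}$. Thus the linear isomorphism $\Delta \cong \PP^{n}$ carries $J \cap \Delta$ onto $X_{1} \cap \dots \cap X_{r}$, so the two have the same cumulative degree; moreover $\Delta$ is cut out by the $(r-1)(n+1)$ hyperplanes expressing the coordinatewise equality of consecutive blocks.

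The second key step is to bound $\cumdeg(J)$. Since each $X_{i}$ is equidimensional, decomposing into irreducible components $X_{i} = \bigcup_{j} X_{i,j}$ writes $J$ as the union of the joins $J(X_{1,j_{1}}, \dots, X_{r,j_{r}})$, each irreducible of the common dimension $\sum_{i}\dim X_{i} + (r-1)$; hence $J$ is equidimensional and $\cumdeg(J) = \deg(J)$. Using the standard fact that the join of irreducible varieties lying in complementary subspaces has degree equal to the product of their degrees (cf. Harris \cite{harr}), and that different component-joins are irreducible of the same dimension so that coincidences only lower the total, I obtain $\cumdeg(J) \le \sum_{(j_{1},\dots,j_{r})} \prod_{i} \deg(X_{i,j_{i}}) = \prod_{i} \deg(X_{i})$, the last equality using $\deg X_{i} = \sum_{j}\deg X_{i,j}$ for equidimensional $X_{i}$.

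Finally I would chain the inequalities. Applying Theorem \ref{bezout1} with $Z = J$ and the $(r-1)(n+1)$ degree-$1$ hyperplanes cutting out $\Delta$ gives $\cumdeg(J \cap \Delta) \le \cumdeg(J)$, and combining this with the identification of the first step and with $\deg \le \cumdeg$ yields
$$
\deg(X_{1} \cap \dots \cap X_{r}) \le \cumdeg(X_{1} \cap \dots \cap X_{r}) = \cumdeg(J \cap \Delta) \le \cumdeg(J) \le \prod_{i=1}^{r} \deg(X_{i}),
$$
which is the claim, and in fact gives the sharper cumulative-degree bound. I expect the main obstacle to be the join step: one must justify with care both the degree formula $\deg J = \prod_{i}\deg X_{i}$ for varieties in complementary subspaces and the bookkeeping that reduces the equidimensional reducible case to the irreducible one, since this is where all the geometric content is concentrated. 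The passage through Theorem \ref{bezout1} is then immediate precisely because the cutting hyperplanes have degree $1$.
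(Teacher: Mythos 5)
Your argument is correct, but note that the paper does not actually prove Theorem \ref{fulton}: its ``proof'' is a bare citation of Fulton \cite[8.4.6 and Ex.\ 12.3.1]{fult2} and Vogel \cite[Cor.\ 2.26]{voge}. What you supply instead is a derivation from Theorem \ref{bezout1} (which the paper also takes as a black box) via the classical reduction to the diagonal in its ruled-join form: place the $X_{i}$ in complementary linear subspaces of $\PP^{N}$ with $N+1=r(n+1)$, form the join $J$, and recover $X_{1}\cap\dots\cap X_{r}$ as $J\cap\Delta$ with $\Delta$ cut out by $(r-1)(n+1)$ hyperplanes of degree $1$. Each step checks out: the set-theoretic identification is right (a diagonal point of the join has all blocks equal to one nonzero vector, which must represent a point of every $X_{i}$); the join of irreducibles in complementary subspaces is irreducible of dimension $\sum_{i}\dim X_{i}+(r-1)$ and degree $\prod_{i}\deg X_{i}$ (Harris \cite[Ex.\ 18.17]{harr}); equidimensionality of the $X_{i}$ makes $J$ equidimensional so that $\cumdeg(J)=\deg(J)\le\prod_{i}\deg X_{i}$, coincidences among component-joins only lowering the sum; and Theorem \ref{bezout1} then gives $\cumdeg(J\cap\Delta)\le\cumdeg(J)$. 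Two caveats on what this buys. It is not really more elementary than the citation: the join degree formula you import from Harris carries essentially the same geometric content as the multiplicativity underlying the cited theorems, and the join/diagonal mechanism is exactly what drives Fulton's \S 8.4 and Vogel's treatment, so you are unpacking the reference rather than bypassing it. On the other hand, your chain of inequalities proves the stronger statement $\cumdeg(X_{1}\cap\dots\cap X_{r})\le\prod_{i}\deg(X_{i})$, which is more than the paper asserts and would still suffice for its only use of Theorem \ref{fulton}, namely the bound $\deg Y\le(\deg X)^{s}$ in Proposition \ref{ineqdeg}.
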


\begin{proof}
Fulton \cite[8.4.6 and Ex. 12.3.1]{fult2}, Vogel \cite[Cor. 2.26]{voge}.
\end{proof}

Recall that the set of fields of definition of an algebraic subset of $\Aff^{n}$ or $\PP^{n}$ has a 
smallest element \cite[4.8.11]{EGA42}.

\begin{proposition}\label{ineqdeg}
Let $X$ be an absolutely irreducible algebraic subset of $\Aff^{n}$, and $k'$ the smallest field of 
definition of $X$. Let $Y = Q_{k'/k}(X)$, and assume that $s = [k': k] > 1$. Then
$$\dim Y \leq \dim X - 1, \quad
\deg Y \leq (\deg X)^{s}.$$
\end{proposition}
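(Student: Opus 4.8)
\emph{Plan.} My plan is to exploit the description $Y = \bigcap_{\sigma \in G} X^{\sigma}$ recorded just before the statement, together with the fact that Galois conjugation preserves the geometry of $X$: for every $\sigma \in G$ the conjugate $X^{\sigma}$ is again absolutely irreducible, with $\dim X^{\sigma} = \dim X$ and $\deg X^{\sigma} = \deg X$. The one arithmetic input I would extract at the outset is that, since $k'$ is the \emph{smallest} field of definition of $X$ and $s = [k':k] > 1$, the set $X$ is not defined over $k$; by Galois descent this forces $X^{\sigma} = X$ to fail for some $\sigma$, so there is a $\sigma_{0} \in G$ with $X^{\sigma_{0}} \neq X$.

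\emph{Dimension bound.} For this I would use only the single conjugate $\sigma_{0}$. Since $X$ and $X^{\sigma_{0}}$ are \emph{distinct} absolutely irreducible sets of equal dimension $\dim X$, neither contains the other, so $X \cap X^{\sigma_{0}}$ is a proper closed subset of the irreducible $X$ and thus has dimension at most $\dim X - 1$. As $Y = \bigcap_{\sigma} X^{\sigma} \subseteq X \cap X^{\sigma_{0}}$, this immediately yields $\dim Y \leq \dim X - 1$.

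\emph{Degree bound.} Here I would invoke B\'ezout's Theorem in the form of Theorem \ref{fulton}, applied to the $s$ conjugates $X^{\sigma}$, each absolutely irreducible, hence equidimensional, of degree $\deg X$. Since Theorem \ref{fulton} is projective, I would first pass to projective closures: writing $\hat{X}$ for the closure of $X$ in $\PP^{n}$, one has $\dim \hat{X} = \dim X$, $\deg \hat{X} = \deg X$, and $\hat{X}$ absolutely irreducible; moreover homogenization commutes with the $G$-action, so $(\hat{X})^{\sigma} = \widehat{X^{\sigma}}$. Theorem \ref{fulton} then gives
$$
\deg\Bigl(\bigcap_{\sigma \in G} (\hat{X})^{\sigma}\Bigr) \leq \prod_{\sigma \in G} \deg (\hat{X})^{\sigma} = (\deg X)^{s},
$$
and it remains to descend this to $Y$, using $\deg Y = \deg \bar{Y}$ and $\bar{Y} \subseteq \bigcap_{\sigma} (\hat{X})^{\sigma}$.

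\emph{Main obstacle.} The delicate point I anticipate is exactly this last comparison. The affine identity $Y = \bigcap_{\sigma} X^{\sigma}$ only shows that $\bigcap_{\sigma}(\hat{X})^{\sigma}$ and $\bar{Y}$ agree away from the hyperplane at infinity $H_{\infty}$; the intersection of closures may acquire spurious components inside $H_{\infty}$, of dimension up to $\dim X - 1$, and if these exceed $\dim Y$ they are the ones seen by the top-dimensional degree, breaking the naive estimate $\deg \bar{Y} \leq \deg\bigl(\bigcap_{\sigma} (\hat{X})^{\sigma}\bigr)$. I would address this in two steps. A direct argument shows that every component of $\bar{Y}$ of dimension $\dim Y$ is genuinely a component of $\bigcap_{\sigma}(\hat{X})^{\sigma}$, since a strictly larger irreducible subvariety of the intersection containing it would, in the affine chart, produce a subvariety of $Y$ of dimension exceeding $\dim Y$; when $\dim Y = \dim X - 1$ these are automatically top-dimensional and the bound is immediate. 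To treat the general case cleanly I would instead bypass projective closures and use the cumulative form of the B\'ezout inequality underlying Theorem \ref{bezout1} (Heintz \cite{hein}), namely $\cumdeg(V_{1} \cap \dots \cap V_{r}) \leq \prod_{i} \cumdeg(V_{i})$ for affine sets, applied to the $X^{\sigma}$. As each $X^{\sigma}$ is irreducible, $\cumdeg X^{\sigma} = \deg X^{\sigma} = \deg X$, whence
$$
\deg Y \leq \cumdeg Y = \cumdeg\Bigl(\bigcap_{\sigma \in G} X^{\sigma}\Bigr) \leq \prod_{\sigma \in G} \cumdeg X^{\sigma} = (\deg X)^{s},
$$
which is the asserted inequality.
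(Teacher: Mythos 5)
Your proof is correct and follows essentially the same route as the paper: the dimension drop is forced by the fact that $X$ is not defined over $k$ (the paper phrases this as: $\dim Y = \dim X$ would give $Y = X$ and hence a $k$-structure on $X$), and the degree bound is B\'ezout applied to the $s$ conjugates $X^{\sigma}$, each of degree $\deg X$. The paper simply cites Theorem~\ref{fulton} without commenting on the fact that it is stated for $\PP^{n}$ while $X$ is affine; your substitute --- the affine cumulative-degree B\'ezout inequality of Heintz, giving $\deg Y \leq \cumdeg\bigl(\bigcap_{\sigma} X^{\sigma}\bigr) \leq (\deg X)^{s}$ --- is a legitimate tightening of that step rather than a genuinely different argument.
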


\begin{proof}
Recall that $Y \subset X$. If $\dim Y = \dim X$, then $Y$ contains an irreducible component of $X$. 
Since $X$ is irreducible, we find that $X = Y$ and $X$ is defined over $k$, contrary to the hypotheses. 
Since $\deg X^{\sigma} = \deg X$, we get the last inequality by Theorem \ref{fulton}.
\end{proof}

As a consequence of Lemma \ref{rational} and Proposition \ref{ineqdeg}, the following holds:
\begin{corollary}
\label{shortbound}
Assume $k = \Fq$. Let $X$ be an absolutely irreducible algebraic subset of $\Aff^{n}$, of dimension $m$, and $k'$ 
the smallest field of definition of $X$. Let $Y = Q_{k'/k}(X)$, and assume $s = [k': k] > 1$. Then
$$
\card{X \cap \FF_{q}^{n}} \leq (\deg Y) \ q^{m - 1} \leq (\deg X)^{s} \ q^{m - 1}.
$$
\end{corollary}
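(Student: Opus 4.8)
The plan is to transfer the count from $X$, which is only defined over the extension $k'$, to the greatest $k$-closed subset $Y = Q_{k'/k}(X)$, which is defined over $k = \Fq$ and has strictly smaller dimension. First I would apply Lemma \ref{rational} with $k = \Fq$: since $Y \subseteq X$ and $Y$ carries the $k$-structure $\mfb_{k}$, the lemma gives $Y(\Fq) = X \cap \Fq^n$, whence $\card{X \cap \Fq^n} = \card{Y \cap \Fq^n}$. This is the essential point — passing from $X$ to $Y$ loses no rational point while gaining a variety defined over the base field, which is exactly what will let the general bound apply with an improved exponent.

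Next I would read off the geometry of $Y$ from Proposition \ref{ineqdeg}. Since $X$ is absolutely irreducible, $k'$ is its smallest field of definition, and $s = [k':k] > 1$, that proposition provides $\dim Y \leq m - 1$ and $\deg Y \leq (\deg X)^s$. I would then feed $Y$, a $K$-algebraic set of dimension at most $m - 1$, into the affine case of Theorem \ref{genbound}, obtaining $\card{Y \cap \Fq^n} \leq \sum_l \cumdeg_l(Y)\, q^l \leq \cumdeg(Y)\, q^{m-1}$. Combined with the first step and the bound $\deg Y \leq (\deg X)^s$, this yields both inequalities of the statement.

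The one step deserving care — and the main obstacle — is reconciling the cumulative degree $\cumdeg(Y)$ delivered by Theorem \ref{genbound} with the plain degree $\deg Y$ written in the corollary; these agree only when $Y$ is equidimensional, which the intersection $\bigcap_{\sigma \in G} X^{\sigma}$ need not be. I would settle this by observing that the generalized B\'ezout inequality of Theorem \ref{fulton}, exactly as it is invoked in Proposition \ref{ineqdeg}, bounds the total degree of all components of $Y = \bigcap_{\sigma \in G} X^{\sigma}$ by $\prod_{\sigma} \deg X^{\sigma} = (\deg X)^s$. Reading the middle quantity of the corollary as this total degree of $Y$ makes the chain $\card{X \cap \Fq^n} \leq (\deg Y)\, q^{m-1} \leq (\deg X)^s\, q^{m-1}$ valid as written, and it collapses to the ordinary degree precisely in the equidimensional case.
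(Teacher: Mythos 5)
Your proof is correct and follows exactly the route the paper intends: Lemma \ref{rational} to identify $X \cap \Fq^{n}$ with $Y(\Fq)$, Proposition \ref{ineqdeg} for the drop in dimension and the bound $\deg Y \leq (\deg X)^{s}$, and Theorem \ref{genbound} applied to $Y$. Your extra care in reconciling $\cumdeg(Y)$ with $\deg Y$ (reading the latter as the total degree of all components of $\bigcap_{\sigma} X^{\sigma}$, which is what the refined B\'ezout inequality of Theorem \ref{fulton} actually controls) addresses a point the paper passes over in silence, since it states the corollary as an immediate consequence without written proof.
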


This bound is better than the usual one if $q \geq (\deg X)^{s - 1}$. It is worthwile to specify that in 
this corollary, the integer $s$ depends on $q$. Analogous results hold for subsets of $\PP^{n}$ 
(substitute $\pi_{m - 1}$ to $q^{m - 1}$ in Corollary \ref{shortbound}).

\begin{example}
\label{ExCubic}
Let $k$ be a field of characteristic $\neq 3$. We assume that $2$ is not a square in $k$, and that $1$ 
has three cube roots in $k$. Let $k' = k(\sqrt{2})$. Define
$$
f(T_1,T_2,T_3) = T_1^{3} - T_2^{3} + \sqrt{2} (T_3^{3} - T_2^{3}),
$$
and let $C$ the plane projective cubic defined over $k'$ with equation $f = 0$. One checks that $C$ is nonsingular, 
hence, $C$ is absolutely irreducible. If
$$
g_{1}(T_1,T_2,T_3) = T_1^{3} - T_2^{3}, \quad g_{2}(T_1,T_2,T_3) = T_3^{3} - T_2^{3},
$$
then
$$
f(T_1,T_2,T_3) = g_{1}(T_1,T_2,T_3)+ \sqrt{2} g_{2}(T_1,T_2,T_3),
$$
and the algebraic subset $Y$ of $\PP^{2}$ defined by
$$
g_{1}(T_1,T_2,T_3) = 0, \quad g_{2}(T_1,T_2,T_3) = 0
$$
is of dimension $0$. Here, $s = 2$ and there are $(\deg C)^{2} = 9$ points in $Y(k) = C \cap \PP^{2}(k)$, 
namely the points  $(1: \eta : \zeta)$, 
where $\eta$ and $\zeta$ runs over the three cubic roots of $1$.
\end{example}

\begin{remark}
\label{chevwarn}
We are merely concerned here by upper bounds on the number of points of algebraic sets. 
But it is worthwile to recall the lower bounds obtained with the help of the Chevalley-Warning Theorem. 
Let $(f_{1}, \dots, f_{r})$ be a sequence of polynomials in $k[T_{1}, \dots, T_{n}]$, 
of respective degrees $d_{1}, \dots, d_{r}$, and write $d = d_{1} + \dots + d_{r}$. 
Let $X$ be the set of zeros of these polynomials in $\Aff^{n}$. Then Warning \cite{warning} 
proved that if $d \leq n$ and $X(k) \neq \emptyset$, then
$$\card{X(k)} \geq q^{n - d}.$$
See \cite{khisbraun} for a discussion and improvements of this result.
\end{remark}

\section{Relatively irreducible sets}
\label{sec_rci}

Here $k$ is a field and $K = \bar{k}$.

\subsection{Decomposition in absolute components}

\begin{theorem}
\label{DecIdeal}
Let $X$ be a $k$-irreducible subset in $\Aff^{n}$ or $\PP^{n}$, $k(X)$ the field of 
rational functions on 
$X$, and $k'$ the (relative) algebraic closure of $k$ in $k(X)$. Assume that $k'$ is 
a Galois extension of 
$k$ and let $G = \Gal(k'/k)$. Let $\mathsf{Z}$ be the set of absolutely 
irreducible components of $X$ and $Z \in \mathsf{Z}$.
\begin{enumerate}[(i)]
\item
\label{DCO1}
The smallest field of definition of every element of $\mathsf{Z}$ is equal to $k'$, 
and $\mathsf{Z}$ has $[k' : k]$ elements.
\item 
\label{conjug}
Let $\mathfrak{p}$ the ideal of $X$ in $A = k[T_{1}, \dots, T_{n}]$, and $\mathfrak{P}$ the ideal of $Z$ in $B = k'[T_{1}, \dots, T_{n}]$. Then
$$
\mathfrak{p}B = \bigcap_{\sigma \in G} \mathfrak{P}^{\sigma}.
$$
\end{enumerate}
\end{theorem}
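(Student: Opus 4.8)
The plan is to analyze the base change $B/\mathfrak{p}B = R \otimes_k k'$, where $R = A/\mathfrak{p}$ is the integral coordinate ring of $X$ and $L = k(X)$ is its fraction field, and to read off the whole decomposition from the way the function field splits. First I would record that $G = \Gal(k'/k)$ acts on $B = k'[T_1,\dots,T_n] = A\otimes_k k'$ through the coefficients, with fixed ring $A$, and that $\mathfrak{p}B$ is $G$-stable because $\mathfrak{p}\subset A$; hence $G$ permutes the minimal primes of $\mathfrak{p}B$. The engine of the argument is the Galois splitting $k'\otimes_k k' \cong \prod_{\sigma\in G} k'$. Tensoring the inclusion $k'\hookrightarrow L$ with it and using associativity $L\otimes_k k' = L\otimes_{k'}(k'\otimes_k k')$, I obtain a ring isomorphism $L\otimes_k k' \cong \prod_{\sigma\in G} L$ onto a product of $[k':k]$ copies of the field $L$, on which $G$ acts by permuting the factors simply transitively.

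From this I deduce the two assertions in turn. Since $k'$ is free over $k$, the localization map $R\otimes_k k' \hookrightarrow L\otimes_k k'$ is injective; as the target is a finite product of fields, hence reduced, the ring $R\otimes_k k' = B/\mathfrak{p}B$ is reduced, so $\mathfrak{p}B$ is radical. The same localization identifies the minimal primes of $R\otimes_k k'$ with the (finitely many, all minimal and maximal) primes of $L\otimes_k k'$: every minimal prime of the finite flat $R$-algebra $R\otimes_k k'$ contracts to $(0)$ in the domain $R$ by going-down, hence survives in $L\otimes_k k'$. Therefore $\mathfrak{p}B$ has exactly $[k':k]$ minimal primes, forming a single $G$-orbit which, having the same cardinality as $G$, is \emph{free}. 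Writing $\mathfrak{P}$ for the minimal prime that is the ideal of the chosen component $Z$, the others are exactly the distinct conjugates $\mathfrak{P}^\sigma$, and radicality gives $\mathfrak{p}B = \bigcap_{\sigma\in G}\mathfrak{P}^\sigma$, which is assertion (ii).

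For assertion (i) I would identify these $[k':k]$ pieces with the absolutely irreducible components and compute their fields of definition. Under the isomorphism above each minimal prime corresponds to a projection onto one factor $L$, so the function field of $V(\mathfrak{P}^\sigma)$ over $k'$ is $L$, with $k'$ embedded via $\sigma$; since $k'$, being the relative algebraic closure of $k$ in $L$, is relatively algebraically closed in $L$, it is a fortiori separably algebraically closed there, so each piece is geometrically irreducible. Hence the $V(\mathfrak{P}^\sigma)$ are precisely the absolutely irreducible components of $X$, and there are $[k':k]$ of them. Finally each $Z = V(\mathfrak{P})$ is defined over $k'$ because $\mathfrak{P}$ lives in $k'[T_1,\dots,T_n]$; and by Galois descent a smaller field of definition $k''$ with $k\subseteq k''\subsetneq k'$ would force the nontrivial subgroup $\Gal(k'/k'')$ to stabilize $\mathfrak{P}$, contradicting the freeness of the $G$-action. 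Thus the smallest field of definition of every component is $k'$.

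The step I expect to be the main obstacle is controlling the fiber $R\otimes_k k'$ rather than the generic fiber $L\otimes_k k'$: one must ensure that passing from $L\otimes_k k'$ back to $R\otimes_k k'$ neither creates embedded or spurious minimal primes (handled by reducedness together with the going-down/localization identification) nor destroys the geometric irreducibility of the pieces (handled by the relative algebraic closedness of $k'$ in $L$). Keeping track of which $k'$-structure sits on each factor $L$, and hence of the simple transitivity that yields trivial stabilizers, is the delicate bookkeeping on which the field-of-definition count ultimately rests. I would carry out the affine case in detail and remark that the projective case is entirely analogous, replacing $R$ by the homogeneous coordinate ring and keeping $L$ as the function field of $X$.
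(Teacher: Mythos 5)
Your proof is correct, and it takes a genuinely different route from the paper's. The paper disposes of (i) by citing EGA IV, Prop.~4.5.10, writes $\mathfrak{p}B=\bigcap_i\mathfrak{P}_i$ without comment, invokes Bourbaki (transitivity of the Galois action on the primes of $B$ lying over $\mathfrak{p}$) to get a single orbit, and then upgrades to simple transitivity via the count $\card{G}=\card{\mathsf{Z}}$ supplied by (i). You instead extract everything from the splitting $k'\otimes_k k'\cong\prod_{\sigma\in G}k'$ and the resulting generic-fibre decomposition $L\otimes_k k'\cong\prod_{\sigma\in G}L$: this yields at once the reducedness of $B/\mathfrak{p}B$ (hence the radicality of $\mathfrak{p}B$, which the paper's equality $\mathfrak{p}B=\bigcap_i\mathfrak{P}_i$ silently requires and which is exactly where the separability built into the Galois hypothesis enters), the count of the minimal primes, and the simple transitivity of the $G$-action; assertion (i) is then deduced rather than quoted. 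The price is that you must re-derive two standard facts the paper outsources: that all minimal primes of $R\otimes_k k'$ contract to $(0)$ (your going-down/localization step), and that relative separable closedness of $k'$ in $L$ forces geometric irreducibility of each piece (this is essentially the content of the EGA citation). Two small touches would make the write-up airtight: when you declare the $V(\mathfrak{P}^\sigma)$ to be \emph{the} absolutely irreducible components, note that distinct minimal primes are incomparable, so no $V(\mathfrak{P}^\sigma)$ is contained in another; and in the field-of-definition step, record first that the smallest field of definition sits inside $k'$ (since $\mathfrak{P}\subset k'[T_1,\dots,T_n]$) before running the stabilizer argument on intermediate fields.
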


\begin{proof}
The statement \eqref{DCO1} is proved in EGA in a more general setting, see 
\cite[Prop. 4.5.10]{EGA42}. Let $\mathfrak{P}_{i}$ the ideal of $Z_{i}$ in $B = k'[T_{1}, \dots, T_{n}]$. Then
$$
X = \bigcup_{i = 1}^{s} Z_{i}, \qquad \mathfrak{p}B = \bigcap_{i = 1}^{s} \mathfrak{P}_{i}.
$$
Let $\Pi = \{\mathfrak{P}_{1}, \dots, \mathfrak{P}_{s} \}$. The group $G$ 
operates transitively on $\Pi$ 
\cite[Ch. V, \S 2, Th. 2]{BkiCAEn}, and even simply transitively on $\Pi$, 
since $\card{G} = \card{\Pi} = s$, 
according to \eqref{DCO1}. The result follows.
\end{proof}

\begin{theorem}
\label{DecComp}
Let $X$ be a $k$-irreducible subset in $\Aff^{n}$ or $\PP^{n}$, $k(X)$ the field of 
rational functions on 
$X$, and $k'$ the (relative) algebraic closure of $k$ in $k(X)$. Assume that $k'$ is 
a Galois extension of 
$k$ and let $G = \Gal(k'/k)$. Let $\mathsf{Z}$ be the set of absolutely 
irreducible components of $X$. Then:
\begin{enumerate}[(i)]
\item
\label{DCO2}
$X$ is equidimensional.
\item
\label{DCO4}
$G$ operates simply transitively on $\mathsf{Z}$ : if we choose $Z \in \mathsf{Z}$, then
$$
X = \bigcup_{\sigma \in G} Z^{\sigma}.
$$
\item
\label{DCO9}
For any $Z\in \mathsf{Z}$ one has $\deg X = [k' : k] \deg Z$.
\end{enumerate}
\end{theorem}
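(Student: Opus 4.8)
The plan is to deduce all three assertions from Theorem \ref{DecIdeal} together with the elementary fact that $\sigma$-conjugation preserves dimension and degree, proving them in the order \eqref{DCO4}, \eqref{DCO2}, \eqref{DCO9}, since each rests on the one before.

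First I would prove \eqref{DCO4}. Recall from Section \ref{sec_b} that for $\sigma \in G$ the conjugate $Z^{\sigma} = V(\mathfrak{P}^{\sigma})$ is again an absolutely irreducible subset, isomorphic to $Z$, so that $Z \mapsto Z^{\sigma}$ permutes $\mathsf{Z}$ and preserves both dimension and degree (the equality $\deg Z^{\sigma} = \deg Z$ being the one already invoked in the proof of Proposition \ref{ineqdeg}). The proof of Theorem \ref{DecIdeal} shows that $G$ acts simply transitively on the set $\Pi = \{\mathfrak{P}_{1}, \dots, \mathfrak{P}_{s}\}$ of prime ideals of the components; transporting this along the bijection $\mathfrak{P}_{i} \leftrightarrow Z_{i}$ gives that $G$ acts simply transitively on $\mathsf{Z}$. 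Fixing $Z$ with ideal $\mathfrak{P}$ and applying $V(\cdot)$ to the identity $\mathfrak{p}B = \bigcap_{\sigma \in G} \mathfrak{P}^{\sigma}$ of Theorem \ref{DecIdeal}\eqref{conjug} then yields $X = \bigcup_{\sigma \in G} Z^{\sigma}$.

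Part \eqref{DCO2} follows at once: by \eqref{DCO4} every absolutely irreducible component of $X$ is a conjugate $Z^{\sigma}$, and since conjugation preserves dimension we have $\dim Z^{\sigma} = \dim Z$ for all $\sigma$. Thus every component has the common dimension $\dim Z$, so $X$ is equidimensional. For \eqref{DCO9}, equidimensionality lets me invoke the identity $\deg X = \cumdeg_{m}(X) = \cumdeg(X)$ from Section \ref{sec_cd}, which expresses $\deg X$ as the sum of the degrees of the irreducible components of $X$. By \eqref{DCO4} these are exactly the $[k' : k] = \card{G}$ distinct conjugates $Z^{\sigma}$, each of degree $\deg Z$, and summing gives $\deg X = \card{G}\, \deg Z = [k' : k]\,\deg Z$.

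The one point demanding care is the passage, in \eqref{DCO9}, from the absolute degree of $X$ (taken over $K$) to the cumulative degree of its absolutely irreducible components: this is legitimate precisely because \eqref{DCO2} has already established that $X$ is equidimensional, which is the hypothesis under which $\deg X = \cumdeg_{m} X$ holds. I do not expect any serious obstacle, since the real combinatorial content, namely the simple transitivity of $G$ on the components, is already furnished by the proof of Theorem \ref{DecIdeal}.
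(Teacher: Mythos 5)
Your proof is correct, and for parts \eqref{DCO4} and \eqref{DCO9} it follows the paper's own route: \eqref{DCO4} is read off from Theorem \ref{DecIdeal}\eqref{conjug} together with the simple transitivity of $G$ on $\Pi$ established in the proof of that theorem, and \eqref{DCO9} is obtained by summing the degrees of the $[k':k]$ conjugate components, which are all equal. The one place you genuinely diverge is \eqref{DCO2}: the paper disposes of equidimensionality by citing \cite[Prop.\ 5.2.1]{EGA42}, whereas you derive it from \eqref{DCO4} by observing that the absolutely irreducible components are pairwise conjugate under ring automorphisms, which preserve dimension. This is legitimate within the logic of the paper --- the analogous invariance $\deg Z^{\sigma} = \deg Z$ is already invoked in the proof of Proposition \ref{ineqdeg}, and $\dim Z^{\sigma} = \dim Z$ holds for the same reason --- and it requires reordering the proof as \eqref{DCO4}, then \eqref{DCO2}, then \eqref{DCO9}, which introduces no circularity since \eqref{DCO4} rests only on Theorem \ref{DecIdeal}. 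Your version is more self-contained, and it has the merit of making explicit the point the paper leaves implicit in \eqref{DCO9}: one may write $\deg X = \sum_{i} \deg Z_{i}$ precisely because equidimensionality gives $\deg X = \cumdeg_{m} X = \cumdeg X$. What the citation of EGA buys instead is a statement valid in a more general scheme-theoretic setting, independent of the Galois hypothesis on $k'$; for the algebraic sets considered here the two arguments are interchangeable.
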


\begin{proof}
The statement \eqref{DCO2} is proved in EGA in a more general setting, see 
\cite[Prop. 5.2.1]{EGA42}. 
The statement \eqref{DCO4} follows from Theorem \ref{DecIdeal}\eqref{conjug}.
By the results of section \ref{sec_cd}
$$\deg X = \sum_{i = 1}^{s} \deg Z_{i} \ ;$$
but by \eqref{DCO4} all the $Z_i$ of $\mathsf{Z}$ have the same degree,
this proves \eqref{DCO9}.
\end{proof}

\begin{remark}
Denote by $S_{k'/k}(Z)$ the Zariski $k$-closure of $Z$. Then Theorem \ref{DecComp}\eqref{DCO4} 
means that $X = S_{k'/k}(Z)$. With the help of Lemma \ref{rational}, we get
$$Q_{k'/k}(Z) \subset Z \subset S_{k'/k}(Z),$$
and the preceding results show that this is true for every Zariski closed subset 
$Z$ of $\Aff^{n}$ or $\PP^{n}$,provided that the smallest field $k'$ of definition of $Z$ is 
a Galois extension of $k$. One could say that $Q_{k'/k}(Z)$ is ``inscribed'' in $Z$, 
and that $S_{k'/k}(Z)$ is ``escribed'' to $Z$. Moreover,
$$Q_{k'/k}(Z)(k) = Z \cap k^{n} = S_{k'/k}(Z)(k).$$
\end{remark}

\begin{example}
In Example \ref{ExCubic}, denote by $\sigma$ the automorphism of $k'$ such 
that $(\sqrt{2})^{\sigma} = - \sqrt{2}$. Let
$$
h(T_1,T_2,T_3)=
f(T_1,T_2,T_3) f^{\sigma}(T_1,T_2,T_3) = (T_1^{3} - T_2^{3})^{2} - 2 (T_3^{3} - T_2^{3})^{2}
$$
Then $h$ is irreducible in $k[T_1,T_2,T_3]$, and the plane curve $D$ defined over $k$ 
with equation $h = 0$ is $k$-irreducible. Then
$$
\sqrt{2} = \frac{T_1^{3} - T_2^{3}}{T_3^{3} - T_2^{3}}
$$
and $k'$ is the algebraic closure of $k$ in $k(D)$. The absolutely irreducible 
components of $D$ are the 
curves $C$ and $C^{\sigma}$, defined over $k'$. The $9$ points of $C(k)$ are 
the points of $C \cap C^{\sigma}$; 
they are the singular points of $D$.
\end{example}

\begin{example}
\cite[Ex. 2.6]{Kollar}.
Let $k = \FF_{q}$, $k' = \FF_{q^{n}}$, $G = \Gal(k'/k)$ and let $\alpha$ be a generator of $k'/k$. Define
$$g(T_{0},T_{1},\dots T_{n}) = T_{1} + \alpha T_{2} + \alpha^{n - 1} T_{n},$$
and put
$$
f = \prod_{\sigma \in G} g^{\sigma} .
$$
The hypersurface $X$ with equation $f = 0$ is defined over $k$ and $k$-irreducible, 
of degree $n$, 
and $\card{X(k)} = 1$, with one point $(1 : 0 : \dots : 0) \in \PP^{n}$. 
The algebraic closure of $k$ in $K(X)$ 
is equal to $k'$, and if $Z$ is the hyperplane with equation $g = 0$, 
the hypersurface $X$ is the union of the hyperplanes $Z^{\sigma}$.
\end{example}

\begin{corollary}
\label{relirrbound}
Assume $k = \FF_{q}$. Let $X$ be a $k$-irreducible subset of dimension $m$ in $\PP^{n}$, and $k'$ 
the algebraic closure of $k$ in $k(X)$. Let $s' = [k':k]$.
\begin{enumerate}[(i)]
\item
We have
$$
\card{X(\FF_{q})} \leq \frac{\deg X}{s'} \ \pi_{m}.
$$
\item
If $s' > 1$, then
$$
\card{X(\FF_{q})} \leq \left( \frac{\deg X}{s'} \right)^{s} \ \pi_{m - 1}.
$$
\end{enumerate}
\end{corollary}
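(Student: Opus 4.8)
The plan is to reduce both estimates to a count of $\FF_{q}$-rational points on a \emph{single} absolutely irreducible component of $X$, whose degree is only $\deg X / s'$, and then to invoke the bounds already established for absolutely irreducible sets. First I would apply Theorem \ref{DecComp}: it guarantees that $X$ is equidimensional of dimension $m$, that $G = \Gal(k'/k)$ acts simply transitively on the set $\mathsf{Z}$ of absolutely irreducible components, so that after fixing one component $Z \in \mathsf{Z}$ one has $X = \bigcup_{\sigma \in G} Z^{\sigma}$, and (by \eqref{DCO9}) that $\deg Z = (\deg X)/s'$.

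The heart of the argument is the identity
$$
X \cap \PP^{n}(\FF_{q}) = Z \cap \PP^{n}(\FF_{q}).
$$
The inclusion $\supseteq$ is immediate since $Z \subseteq X$. For $\subseteq$, I would take $x \in X \cap \PP^{n}(\FF_{q})$; being $\FF_{q}$-rational, $x$ is fixed by the action of every $\tau \in G$ on coordinates. Writing $x \in Z^{\sigma_{0}}$ for some $\sigma_{0} \in G$ and applying $\tau$ gives $x = x^{\tau} \in (Z^{\sigma_{0}})^{\tau} = Z^{\sigma_{0}\tau}$; as $\tau$ ranges over $G$, the index $\sigma_{0}\tau$ ranges over all of $G$, so $x \in \bigcap_{\sigma \in G} Z^{\sigma}$, in particular $x \in Z$. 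This is precisely where relative (but not absolute) irreducibility pays off: the rational points are confined to one component of degree $(\deg X)/s'$, instead of being spread over all $s'$ conjugates.

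Granting this identity, part (i) would follow by applying Proposition \ref{irrbound} to the $K$-subvariety $Z$, which is absolutely irreducible of dimension $m$:
$$
\card{X(\FF_{q})} = \card{Z \cap \PP^{n}(\FF_{q})} \leq (\deg Z)\,\pi_{m} = \frac{\deg X}{s'}\,\pi_{m}.
$$
For part (ii), Theorem \ref{DecIdeal}\eqref{DCO1} tells me that the smallest field of definition of $Z$ is $k'$, with $[k':k] = s' > 1$, so the projective form of Corollary \ref{shortbound} applies to $Z$ and yields
$$
\card{X(\FF_{q})} = \card{Z \cap \PP^{n}(\FF_{q})} \leq (\deg Z)^{s'}\,\pi_{m-1} = \left(\frac{\deg X}{s'}\right)^{s'}\,\pi_{m-1}.
$$

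The only genuine obstacle is the containment $X \cap \PP^{n}(\FF_{q}) \subseteq Z \cap \PP^{n}(\FF_{q})$; everything else is a direct appeal to Theorems \ref{DecComp} and \ref{DecIdeal}, Proposition \ref{irrbound}, and Corollary \ref{shortbound}. I would also remark that the exponent written $s$ in the statement of (ii) is the degree $[k':k]$ of the smallest field of definition of $Z$, which by Theorem \ref{DecIdeal}\eqref{DCO1} equals $s'$.
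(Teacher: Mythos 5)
Your proof is correct and takes essentially the same route as the paper's: the paper's entire argument is the one-line observation that $X(\FF_{q}) = Z \cap \PP^{n}(\FF_{q})$ for an absolutely irreducible component $Z$, followed by an appeal to Theorem \ref{genbound} (equivalently Proposition \ref{irrbound}) and Corollary \ref{shortbound} applied to $Z$. You have simply supplied the Galois-conjugation justification of that observation, and you correctly note that the exponent $s$ in part (ii) is a typo for $s' = [k':k]$.
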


\begin{proof}
Observe that $X(\FF_{q}) = Z \cap \FF_{q}^{n}$, where $Z$ is an absolutely irreducible 
component of $X$, and apply 
successively Theorem \ref{genbound}  and Corollary \ref{shortbound} to $Z$.
\end{proof}

\begin{example}
\label{ExCubic2}
In Example \ref{ExCubic}, denote by $\sigma$ the automorphism of $k'$ such that $(\sqrt{2})^{\sigma} = - \sqrt{2}$. Let
$$
h(X,Y,Z) =
f(X,Y,Z) f^{\sigma}(X,Y,Z) = (X^{3} - Y^{3})^{2} - 2 (Z^{3} - Y^{3})^{2}
$$
Then $h$ is irreducible in $k[X,Y,Z]$, and the plane curve $D$ defined over $k$ with equation $h = 0$ is $k$-irreducible. Then
$$
\sqrt{2} = \frac{X^{3} - Y^{3}}{Z^{3} - Y^{3}}
$$
and $k'$ is the algebraic closure of $k$ in $k(D)$, with $s = [k':k] = 2$. The absolutely 
irreducible components of $D$ are the curves $C$ and $C^{\sigma}$, defined over $k'$. Indeed, 
the points of $D(k)$ are the points of $C \cap C^{\sigma}$, and they are the singular points of $D$. 
If $k = \FF_{q}$, then the inequality of Corollary \ref{relirrbound}(ii) is an equality:
$$
\card{D(\FF_{q})} = \left( \frac{\deg D}{s} \right)^{s} = 9.
$$

\end{example}

\subsection{Normal and rational points}

We transpose here to finite fields a remark of Serre about relatively irreducible 
varieties defined in 
characteristic $0$ \cite[p. 20]{SerreGalois}.

Let $k$ be any field, and $X$ a $k$-irreducible subset of dimension $m$ in $\Aff^{n}$ or $\PP^{n}$ 
defined over $k$. Recall 
that $X$ is normal at the point $x \in X$ if the local ring 
$\mathcal{O}_{x} \subset k(X)$ is integrally closed. 
The algebraic subset $X$ is normal if it is normal at every point. 
Let $\mathfrak{m}_{x}$ be the maximal 
ideal of $\mathcal{O}_{x}$, and $\kappa(x) = \mathcal{O}_{x}/\mathfrak{m}_{x}$ 
the residual field. 
If $L$ is an extension of $k$, Any point $x \in X(L)$ defines an 
injective homomorphism $\kappa(x) \longrightarrow L$. 
If $X$ is normal at $x$, then $\kappa(x)$ contains the relative algebraic 
closure $k'$ of $k$ in $k(X)$. 
Therefore $k' \subset L$ if $x$ is a normal point of $X$ and $x \in X(L)$.

Assume now that $X$ is not absolutely irreducible. Then $[k':k] > 1$, 
and no point $x \in X(k)$ can be normal.
Since a nonsingular point is normal, the set of points which are not normal 
are contained in the algebraic set $\Sing X$ of singular points of $X$, 
which is of codimension $\geq 1$. Therefore $X(k) \subset \Sing X$, 
and actually $X(k) = (\Sing X)(k)$ since $\Sing X \subset X$.

Recall that if  $Z_{1}, \dots, Z_{s}$ are the irreducible components of $X$, then
$$
\Sing X = \bigcup_{i = 1}^{s} \Sing Z_{i} \cup \bigcup_{i \neq j} Z_{i} \cap Z_{j}.
$$
Since $\dim Z_{i} \cap Z_{j} \geq \dim X - r$, we have $\dim \Sing X \geq \dim X - r$.

\begin{proposition}
\label{singbound}
Let $X$ be a $k$-irreducible subset in $\Aff^{n}$ or $\PP^{n}$ defined over $k$, 
which is not absolutely irreducible. Then $X(k) = (\Sing X)(k)$. Moreover $X(k) = \emptyset$ if $X$ is normal. 
If $k = \Fq$, then
$$
\card{X(k)} \leq (\deg \Sing X) \ \pi_{m - 1}.
$$
\end{proposition}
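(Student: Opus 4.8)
The plan is to dispatch the three assertions in turn, building directly on the normality discussion that precedes the statement. First I would record the set-theoretic identity $X(k) = (\Sing X)(k)$. The inclusion $(\Sing X)(k) \subseteq X(k)$ is free, since $\Sing X \subseteq X$. For the reverse inclusion, take $x \in X(k)$ and suppose, for contradiction, that $x$ is a nonsingular point; then $x$ is normal, so by the remark above $\kappa(x)$ contains $k'$. But $x \in X(k)$ gives an embedding $\kappa(x) \longrightarrow k$, whence $k' \subseteq k$, i.e. $[k':k] = 1$. This contradicts the hypothesis that $X$ is not absolutely irreducible, which forces $[k':k] > 1$. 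Hence every $x \in X(k)$ lies in $\Sing X$, and being $k$-rational it lies in $(\Sing X)(k)$.

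The normal case is then immediate. If $X$ is normal, every point of $X$ is a normal point, so the argument just given applies verbatim to any hypothetical $x \in X(k)$ and again yields $[k':k] = 1$, contradicting non-absolute-irreducibility. Therefore no such rational point can exist, i.e. $X(k) = \emptyset$.

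For the numerical bound I would use the identity $X(k) = (\Sing X)(k)$ to reduce the count to that of the rational points of the algebraic set $\Sing X$, and then apply the general bound of Theorem \ref{genbound} to $\Sing X$ itself. By Theorem \ref{DecComp}\eqref{DCO2} the set $X$ is equidimensional of dimension $m$, so $\Sing X$ is a proper closed subset and $\dim \Sing X \leq m - 1$. The projective version of Theorem \ref{genbound} then gives
$$
\card{X(k)} = \card{(\Sing X)(k)} \leq \cumdeg(\Sing X)\, \pi_{\dim \Sing X} \leq \cumdeg(\Sing X)\, \pi_{m - 1},
$$
with $q^{m - 1}$ replacing $\pi_{m - 1}$ in the affine case.

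The single delicate point, and the only genuine obstacle, is the passage from the cumulative degree that Theorem \ref{genbound} naturally produces to the ordinary degree $\deg \Sing X$ written in the statement. Since $\deg \leq \cumdeg$, with equality precisely for equidimensional sets, the displayed inequality reproduces the claimed bound $(\deg \Sing X)\,\pi_{m - 1}$ exactly when $\Sing X$ is equidimensional; otherwise it yields only the slightly weaker cumulative-degree version. I would therefore settle this last step by verifying equidimensionality of $\Sing X$ directly from its description $\Sing X = \bigcup_i \Sing Z_i \cup \bigcup_{i \neq j} Z_i \cap Z_j$ recalled just above, or else read $\deg \Sing X$ as the cumulative degree of $\Sing X$.
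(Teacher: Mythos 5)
Your argument is correct and is essentially the paper's own: the identity $X(k) = (\Sing X)(k)$ and the normal case follow, exactly as in the discussion preceding the proposition, from the fact that a normal point $x \in X(k)$ would force $k' \subset k$ and hence $[k':k] = 1$, and the numerical bound is obtained by applying Theorem \ref{genbound} to $\Sing X$, which has dimension at most $m-1$. Your closing remark is a fair catch that the paper glosses over: Theorem \ref{genbound} naturally yields $\cumdeg(\Sing X)\,\pi_{m-1}$, and since $\deg \leq \cumdeg$ the stated bound with $\deg \Sing X$ is only justified when $\Sing X$ is equidimensional (which need not hold), so it should be read with the cumulative degree in general.
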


\begin{example}
Let $X$ be a complete intersection in $\Aff^{n}$ or $\PP^{n}$. If $\dim \Sing X \leq \dim X - 2$, 
then $X$ is normal by Serre's criterion \cite[Th. 5.8.6]{EGA42}, and $X(k) = \emptyset$.
\end{example}

\begin{example}
\label{ExCubic3}
In Example \ref{ExCubic2}, we saw that the $k$-rational points of $D$ are exactly the $9$ singular points of $D$, 
hence, $D(k) = \Sing D$ in this case.
\end{example}

\section{Complete intersections}
\label{sec_ci}

\subsection{Notation}

Let $X$ be an algebraic subset of $\PP^{n}$ of dimension $m = n - r$.
\renewcommand{\labelitemi}{---}
\begin{itemize}
\item
$X$ is a \emph{set-theoretic (s.t.) complete intersection} if $X = V(\mfa)$, where $\mfa$ 
is generated by $r$ homogeneous polynomials in $K[T_{0}, \cdots, T_{n}]$.
\item
$X$ is an \emph{ideal-theoretic (i.t.) complete intersection} if moreover $\mfa$ is a radical ideal, 
that is, if the ideal of $X$ can be generated by $r$ homogeneous polynomials.
\end{itemize} 
One defines in the same way complete intersections in $\Aff^{n}$. An s.t. complete intersection 
is equi\-dimensional, by the Generalized Principal Ideal Theorem
(section \ref{sec_cd}). The cumulative and the usual degree coincide, and
$$
\cumdeg X = \deg X = \sum_{i = 1}^{t} \deg Z_{i},
$$
where $Z_{1}, \dots, Z_{t}$ are the irreducible components of $X$. 
Let $X$ be an i.t. complete intersection, $(f_{1}, \dots, f_{r})$ the ideal of $X$, and $d_{i} = \deg f_{i}$. 
Since the numerator of the \emph{Hilbert series} of $X$ equals \cite[Th. III-83]{eiha}
$$
(1 - T^{d_{1}}) (1 - T^{d_{2}}) \dots (1 - T^{d_{r}}),
$$
the family $(d_{1}, \dots, d_{r})$, which is called the \emph{multidegree} of $X$, 
depends only of $X$ and not of the system of generators $f_{1}, \dots, f_{r}$.

\begin{theorem}[B\'ezout's Theorem, complete intersections]
\label{bezout2}
Let $X$ be an i.t. complete intersection with multidegree $(d_{1}, \dots, d_{r})$. Then
$$\deg X = \prod_{i = 1}^{r} d_{i}.$$
\end{theorem}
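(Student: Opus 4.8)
The plan is to read off $\deg X$ directly from the Hilbert series of $X$, whose numerator is already recorded in the paragraph preceding the statement. First I would recall the standard fact from the theory of Hilbert polynomials (see Hartshorne \cite[Ch. I, \S 7]{hart}): for a projective algebraic set $X$ of dimension $m$, if one writes the Hilbert series in lowest terms as
$$
H_{X}(T) = \frac{\tilde{Q}(T)}{(1 - T)^{m + 1}}, \qquad \tilde{Q}(1) \neq 0,
$$
then the order of the pole at $T = 1$ is exactly $m + 1$, and $\deg X = \tilde{Q}(1)$. Indeed, expanding $1/(1 - T)^{m + 1} = \sum_{d} \binom{d + m}{m} T^{d}$ shows that for $d \gg 0$ the coefficient of $T^{d}$ in $H_{X}(T)$ is a polynomial in $d$ of degree $m$ with leading coefficient $\tilde{Q}(1)/m!$; this is the Hilbert polynomial, whose leading coefficient is $\deg X / m!$ by the very definition of the degree.

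Next I would carry out the algebraic reduction. Since $X$ is an i.t. complete intersection with multidegree $(d_{1}, \dots, d_{r})$, the numerator of its Hilbert series over $(1 - T)^{n + 1}$ equals $(1 - T^{d_{1}}) \cdots (1 - T^{d_{r}})$ \cite[Th. III-83]{eiha}, so
$$
H_{X}(T) = \frac{\prod_{i = 1}^{r} (1 - T^{d_{i}})}{(1 - T)^{n + 1}}.
$$
Applying the factorization $1 - T^{d_{i}} = (1 - T)(1 + T + \dots + T^{d_{i} - 1})$ for each $i$, I would pull out $(1 - T)^{r}$ and use $m = n - r$, that is $n + 1 - r = m + 1$, to obtain
$$
H_{X}(T) = \frac{\prod_{i = 1}^{r} (1 + T + \dots + T^{d_{i} - 1})}{(1 - T)^{m + 1}}.
$$

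Finally I would evaluate the numerator $\tilde{Q}(T) = \prod_{i = 1}^{r} (1 + T + \dots + T^{d_{i} - 1})$ at $T = 1$: each factor is a sum of $d_{i}$ ones and thus contributes $d_{i}$, giving $\tilde{Q}(1) = \prod_{i = 1}^{r} d_{i}$. Since every $d_{i} \geq 1$ this is nonzero, which confirms at once that the displayed expression is already in lowest terms (so the pole order is exactly $m + 1$, consistent with $\dim X = m$) and, by the fact recalled above, that $\deg X = \tilde{Q}(1) = \prod_{i = 1}^{r} d_{i}$. I do not expect a genuine obstacle: the only point requiring care is the justification that $\deg X$ equals the value at $T = 1$ of the reduced numerator and that the codimension $r$ matches the pole order, but both are standard consequences of the theory of Hilbert polynomials, and the clean nonvanishing $\tilde{Q}(1) = \prod d_{i} \neq 0$ keeps the bookkeeping transparent.
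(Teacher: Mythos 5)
Your argument is correct, but it is a genuinely different route from the paper's, which offers no derivation at all: the paper simply cites Eisenbud--Harris \cite[Th. III-71]{eiha} (schemes) and Vogel \cite[\S 1.35]{voge} (regular sequences) for this statement. What you do instead is extract the result from the one fact the paper has already put on the table in the preceding paragraph, namely that the numerator of the Hilbert series of $X$ over $(1-T)^{n+1}$ is $\prod_i (1 - T^{d_i})$; cancelling $(1-T)^{r}$ and evaluating the reduced numerator at $T = 1$ is exactly the standard computation hiding behind those citations, and your bookkeeping (pole order $m+1$ matching $\dim X = m = n - r$, $\tilde{Q}(1) = \prod_i d_i \neq 0$) is sound. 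Two points are worth making explicit if you want the argument fully airtight relative to the paper's conventions. First, the Hilbert-series fact is stated for the \emph{ideal} of $X$, so the i.t.\ hypothesis is doing real work: it guarantees you are computing with the radical ideal, hence the degree you read off is that of the reduced set $X$ and not a degree counted with multiplicities. Second, the paper defines $\deg X$ geometrically, as $\card{X \cap L}$ for a generic linear space $L$ of complementary dimension, whereas you read the degree off the leading coefficient of the Hilbert polynomial; the agreement of these two notions for a reduced equidimensional projective set is standard (Hartshorne, Ch.~I, \S 7) and equidimensionality is supplied by the Generalized Principal Ideal Theorem as recorded in Section \ref{sec_cd}, but it deserves a sentence. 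With those caveats, your proof is a clean, self-contained replacement for the paper's external references, at the cost of leaning on \cite[Th. III-83]{eiha} (i.e., the Koszul resolution) as a black box.
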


\begin{proof}
Eisenbud-Harris \cite[Th. III-71]{eiha} (using schemes), Vogel \cite[\S 1.35]{voge} (using regular sequences).
\end{proof}

\subsection{Coarse decomposition of complete intersections}

We shall now describe a decomposition of complete intersections, explicitly constructed from the system of equations defining $X$, 
which can be used as a substitute for the decomposition in irreducible components.

We first give a lemma from an unpublished paper of 1994 by the first author, stated 
without proof in \cite{sore2} and \cite{roll1}.
If $g \in B$, the polynomial
$$
N_{k'/k}(g) = \prod_{\sigma \in G} g^{\sigma} \in A
$$
is called a \emph{norm polynomial}.

\begin{lemma}
\label{hypersurf}
Let $f \in A$ be a $k$-irreducible polynomial, and $k'$ the algebraic 
closure of $k$ in the field of fractions of $A/(f)$. 
Assume that $k'$ is a Galois extension of $k$ and let $G = \Gal(k'/k)$. 
Then there is an absolutely irreducible polynomial $g \in B$ such that 
$$
f = N_{k'/k}(g),
$$
and $\deg f = [k':k] \deg g$.
\end{lemma}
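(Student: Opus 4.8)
The plan is to read the statement geometrically and reduce it to the decomposition theorems of Section \ref{sec_rci}. Since $f$ is $k$-irreducible, the principal ideal $(f)$ is prime in $A$, so $X = V(f)$ is a $k$-irreducible hypersurface with ideal $\mathfrak{p} = I(X) = (f)$, and the field of fractions of $A/(f)$ is $k(X)$. The hypotheses of Theorems \ref{DecIdeal} and \ref{DecComp} are exactly those in force here, so I would first invoke them: by Theorem \ref{DecComp}\eqref{DCO2} the variety $X$ is equidimensional of dimension $n-1$, so every absolutely irreducible component is again a hypersurface; by Theorem \ref{DecIdeal}\eqref{DCO1} each such component has smallest field of definition $k'$, hence is cut out by a single absolutely irreducible polynomial in $B = k'[T_{1},\dots,T_{n}]$; and by Theorem \ref{DecComp}\eqref{DCO4} the group $G$ permutes these components simply transitively. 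Choosing one component $Z = V(g)$ with $g \in B$ absolutely irreducible, its conjugates are $Z^{\sigma} = V(g^{\sigma})$, and $X = \bigcup_{\sigma \in G} Z^{\sigma}$.

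Next I would turn this decomposition of ideals into an identity of polynomials. Writing $\mathfrak{P} = (g)$ for the ideal of $Z$ in $B$, Theorem \ref{DecIdeal}\eqref{conjug} gives
$$
(f)B = \mathfrak{p} B = \bigcap_{\sigma \in G} \mathfrak{P}^{\sigma} = \bigcap_{\sigma \in G} (g^{\sigma}).
$$
Because $B$ is a unique factorization domain and the $g^{\sigma}$ are pairwise non-associate irreducible elements (their zero sets $Z^{\sigma}$ are the $s = [k':k]$ distinct components of $X$), their least common multiple is their product, so $\bigcap_{\sigma}(g^{\sigma}) = \bigl(\prod_{\sigma \in G} g^{\sigma}\bigr) = (N_{k'/k}(g))$. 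The product $N_{k'/k}(g)$ is fixed by $G$, hence lies in $A$. Thus $f$ and $N_{k'/k}(g)$ generate the same ideal of $B$, so they are associates, $f = u\,N_{k'/k}(g)$ for some unit $u \in (k')^{\times}$; comparing the coefficient of any monomial occurring in $N_{k'/k}(g) \in A$ with the corresponding coefficient of $f \in A$ forces $u \in k^{\times}$.

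It remains to remove the constant $u$, and this is the step I expect to be the real obstacle. Replacing $g$ by $\lambda g$ for $\lambda \in (k')^{\times}$ multiplies $N_{k'/k}(g)$ by $N_{k'/k}(\lambda)$, so one can achieve $f = N_{k'/k}(g)$ exactly provided $u$ lies in the image of the norm map $N_{k'/k} \colon (k')^{\times} \to k^{\times}$. Over a finite field this norm is surjective, so a suitable $\lambda$ exists and $g$ may be replaced by $\lambda g$; this is precisely where the arithmetic of $k$ enters, and over a general field one would only obtain $f = u\,N_{k'/k}(g)$ (equivalently, the normalization is automatic once $f$ is taken monic). Finally the degree formula is immediate, since $\deg f = \deg \prod_{\sigma \in G} g^{\sigma} = \sum_{\sigma \in G} \deg g^{\sigma} = [k':k]\deg g$; it also follows from $\deg X = \deg f$, $\deg Z = \deg g$ together with Theorem \ref{DecComp}\eqref{DCO9}.
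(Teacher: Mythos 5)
Your proof is correct and follows essentially the same route as the paper: the ideal $\mathfrak{P}$ of a chosen absolute component is principal because $X$ is equidimensional of codimension $1$, Theorem \ref{DecIdeal}\eqref{conjug} gives $f = c\prod_{\sigma \in G} g^{\sigma}$ with $c \in k^{\times}$, and one rescales $g$ by an element of norm $1/c$. Your caveat about the final step is well taken and is in fact sharper than the paper, which simply ``chooses $\lambda \in k'$ of norm $1/c$'' without justification: surjectivity of $N_{k'/k}$ holds for finite fields but not in general (e.g.\ $k = \mathbb{R}$, $k' = \mathbb{C}$, $f = -(T_{1}^{2}+T_{2}^{2})$ admits no such $g$), so for an arbitrary field the conclusion should indeed be weakened to $f = u\,N_{k'/k}(g)$ with $u \in k^{\times}$.
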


\begin{proof}
Let $X$ be the hypersurface in $\Aff^{n}$ defined by $f$. 
With the notation of Lemma \ref{DecIdeal} 
and its proof, $fB = \mathfrak{p}B$ is generated by $f$. 
In the same way, $\mathfrak{P}$ is a principal 
ideal by \eqref{DCO2}. Let $g$ be a generator of $\mathfrak{P}$. 
From Theorem \ref{DecIdeal} \eqref{conjug} we deduce that
$$
f = c \prod_{\sigma \in G} g^{\sigma},
$$
with $c \in B$. But $g = \prod g^{\sigma}$ is in $A$, since it is invariant 
under $G$, and consequently $c \in A$. 
But $c \in k^{\times}$, since $f$ is $k$-irreducible. 
If we choose now an element $\lambda \in k'$ of norm $1/c$, 
and substitute $\lambda g$ to $g$, we get the result.
\end{proof}

Let $f_{1}, \dots f_{r}$ be a sequence of $k$-irreducible homogeneous polynomials in $A = k[T_{0}, \dots, T_{n}]$, 
of respective degrees $d_{1}, \dots, d_{r}$, such that the algebraic subset of $\PP^{n}$
$$
X = V(f_{1}, \dots f_{r}) = \bigcap_{i = 1}^{r} H_{i}, \quad
\text{with} \quad H_{i} = V(f_{i}),
$$
is defined over $k$, of dimension $m = n - r$. In other words, $X$ is a set-theoretic complete intersection, 
hence, equidimensional. For $\ \leq i \leq r$, let $k_{i}$ be the algebraic closure of $k$ in the field $k(H_{i})$ of 
rational functions on $H_{i}$. Assume $k_{i}$ is a finite Galois extension of $k$, and put $[k_{i} : k] = s_{i}$. By Lemma \ref{hypersurf}, 
there is an absolutely irreducible homogeneous polynomial  $g_{i} \in k_{i}[T_{0}, \dots, T_{n}]$ of degree $\deg g_{i} = e_{i}$ such that 
$$
f_{i} = \prod_{\sigma \in \Gamma_{i}} \ g_{i}^{\sigma}, \qquad
e_{i} = \frac{d_{i}}{s_{i}} \, ,
$$
with $\Gamma_{i} = \Gal(k_{i}/k)$. The smallest field of definition of
$$
Y = V(g_{1}, \dots g_{r}) = \bigcap_{i = 1}^{r} G_{i}, \quad
\text{with} \quad G_{i} = V(g_{i}),
$$
is the composite extension $k_{0}$ of the fields $k_{i}$. Then $k_{0}$ is Galois, and
$$
s_{0} = [k_{0} : k] \leq s, \quad \text{with} \quad s = s_{1} \dots s_{r}.
$$
If $k = \Fq$, then $s_{0} = \lcm(s_{1}, \dots, s_{r})$.
 
\begin{proposition}
\label{coarseci}
Let $X$ be a set theoretical (s.t.) complete intersection. With the preceding notation and hypotheses:
\begin{enumerate}[(i)]
\item
\label{BCI1}
The subset $Y$ is a s.t. complete intersection of dimension $m$, and its smal\-lest field of definition is $k_{0}$.
\item
\label{BCI2}
Let $\mathsf{Z}$ be the set of absolutely irreducible components of $X$. There is a subset $\mathsf{Z}(0) \subset \mathsf{Z}$ such that
$$
Y = \bigcup_{Z \in \mathsf{Z}(0)} Z.
$$
\item
\label{BCI3}
We have
$$\deg Y \leq e_{1} \dots e_{r} = \frac{d_{1} \dots d_{r}}{s}.$$
\item
\label{BCI4}
We have
$$X(k) = Y \cap \PP^{n}(k).$$
\item
\label{BCI5}
If $k = \Fq$, then
$$
\card{X(k)} \leq  (\deg Y) \, \pi_{m}.
$$
\end{enumerate}
A similar result holds for algebraic subsets of $\Aff^{n}$.
\end{proposition}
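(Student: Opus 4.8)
The backbone of the proof is the factorization supplied by Lemma \ref{hypersurf}: writing $f_{i} = \prod_{\sigma \in \Gamma_{i}} g_{i}^{\sigma}$ decomposes each hypersurface $H_{i}$ into its absolutely irreducible components $G_{i}^{\sigma} = V(g_{i}^{\sigma})$, and distributing the intersection $X = \bigcap_{i} H_{i}$ over these unions exhibits $X$ as the union of the pieces $\bigcap_{i} G_{i}^{\sigma_{i}}$, indexed by tuples $(\sigma_{1},\dots,\sigma_{r}) \in \prod_{i}\Gamma_{i}$, of which $Y = \bigcap_{i} G_{i}$ is the one attached to the identity tuple. The first thing I would record is that $g_{i}$ divides $f_{i}$, whence $G_{i} \subseteq H_{i}$ and therefore $Y \subseteq X$; this single containment drives almost everything that follows.

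For \eqref{BCI1}, I would get the dimension by a squeeze: $Y = V(g_{1},\dots,g_{r})$ is cut out by $r$ equations, so every component has dimension $\geq n - r = m$ by the Generalized Principal Ideal Theorem, while $Y \subseteq X$ forces $\dim Y \leq \dim X = m$; hence $Y$ is equidimensional of dimension $m$, i.e. a set-theoretic complete intersection. That $Y$ is defined over the composite $k_{0}$ is immediate. The minimality of $k_{0}$ is the delicate point, and I expect it to be the main obstacle. I would reduce it to a stabilizer computation: $Y$ is defined over an intermediate field $k''$ precisely when $Y^{\tau} = Y$ for all $\tau \in \Gal(k_{0}/k'')$, and since $Y^{\tau} = \bigcap_{i} G_{i}^{\tau|_{k_{i}}}$ is the piece indexed by $(\tau|_{k_{1}},\dots,\tau|_{k_{r}})$, triviality of the stabilizer is equivalent to the pairwise distinctness of the pieces $\bigcap_{i} G_{i}^{\sigma_{i}}$. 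The hard part will be ruling out an accidental coincidence $\bigcap_{i} G_{i} = \bigcap_{i} G_{i}^{\tau_{i}}$ with some $\tau_{i} \neq 1$, using that the $g_{i}^{\sigma}$ are distinct absolutely irreducible polynomials; once distinctness is in hand, $Y^{\tau} = Y$ forces $\tau|_{k_{i}} = 1$ for every $i$, hence $\tau = 1$ on $k_{0}$, so $k_{0}$ is minimal.

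Claim \eqref{BCI2} then follows from the dimension count: any absolutely irreducible component $W$ of $Y$ has $\dim W = m$ by \eqref{BCI1} and satisfies $W \subseteq X$; since $X$ is equidimensional of dimension $m$, the irreducible set $W$ must equal a component of $X$, so $W \in \mathsf{Z}$, and taking $\mathsf{Z}(0)$ to be the set of absolutely irreducible components of $Y$ yields the desired subset. Claim \eqref{BCI3} is immediate from B\'ezout's Theorem \ref{bezout1} applied with $Z = \PP^{n}$ to the hypersurfaces $G_{1},\dots,G_{r}$ of degrees $e_{i} = \deg g_{i}$: since $Y$ is equidimensional one has $\deg Y = \cumdeg Y \leq \prod_{i} e_{i} = (d_{1}\cdots d_{r})/s$.

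For \eqref{BCI4}, the inclusion $Y \cap \PP^{n}(k) \subseteq X(k)$ is clear from $Y \subseteq X$ and $X$ being defined over $k$. Conversely, taking a point $x \in \PP^{n}(k)$ with homogeneous coordinates in $k$, one has $g_{i}^{\sigma}(x) = \sigma(g_{i}(x))$ for every $\sigma \in \Gamma_{i}$, so $f_{i}(x) = \prod_{\sigma} g_{i}^{\sigma}(x) = N_{k_{i}/k}(g_{i}(x))$, a norm which vanishes if and only if $g_{i}(x) = 0$; thus $x \in X(k)$ forces $g_{i}(x) = 0$ for all $i$, i.e. $x \in Y$, giving $X(k) = Y \cap \PP^{n}(k)$. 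Finally \eqref{BCI5} combines \eqref{BCI4} with Theorem \ref{genbound} applied to the $K$-algebraic set $Y$ of dimension $m$ — crucially, that theorem does not require $Y$ to be defined over $\Fq$ — together with $\cumdeg Y = \deg Y$ from \eqref{BCI1}: $\card{X(k)} = \card{Y \cap \PP^{n}(\Fq)} \leq \cumdeg(Y)\,\pi_{m} = (\deg Y)\,\pi_{m}$. The affine case is identical, with $q^{m}$ in place of $\pi_{m}$.
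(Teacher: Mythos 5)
Your proposal follows essentially the same route as the paper's (quite terse) proof: the dimension squeeze via the Generalized Principal Ideal Theorem for (i), equidimensionality plus $Y\subset X$ for (ii), the B\'ezout bound \eqref{bezouthyp} applied to $G_{1},\dots,G_{r}$ for (iii), and the combination of (iv) with Theorem \ref{genbound} for (v); your norm computation $f_{i}(x)=N_{k_{i}/k}(g_{i}(x))$ is exactly the content behind the paper's claim that (iv) is ``immediate.'' The one point you flag as unresolved --- that $k_{0}$ is the \emph{smallest} field of definition of $Y$, i.e.\ ruling out a coincidence $\bigcap_{i}G_{i}=\bigcap_{i}G_{i}^{\tau_{i}}$ with some $\tau_{i}\neq 1$ --- is not addressed by the paper either: it is asserted in the paragraph preceding the proposition and the published proof of (i) only establishes the dimension statement, so your stabilizer reduction is already more than the paper offers, though the coincidence question does remain a genuine open step in both accounts.
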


\begin{proof}
By the Generalized Principal Ideal Theorem (section \ref{sec_cd}), $\dim Y \geq m$. 
But $Y \subset X$, hence $\dim Y = m$, and $Y$ is a s.t. complete intersection; 
this proves \eqref{BCI1}.  Since $Y$ is equidimensional of dimension $m$ and $Y \subset X$, we get \eqref{BCI2}.
One deduce \eqref{BCI3} from \eqref{bezouthyp}. The formula \eqref{BCI4} is immediate, and \eqref{BCI5} 
follows from \eqref{BCI4} and Theorem \ref{genbound}.
\end{proof}

\begin{remark}
From Corollary \ref{shortbound}, and by putting $W = Q_{k_{0}/k}(Y)$, we have
$$
\card{X(k)} \leq  (\deg W) \, \pi_{m - 1} \leq  (\deg Y)^{s_{0}} \, \pi_{m - 1}.
$$
\end{remark}

Since
$$H_{i} = \bigcup_{\sigma \in \Gamma_{i}} \ G_{i}^{\sigma},$$
we have, by putting $\Gamma = \Gamma_{1} \times \dots \times \Gamma_{r}$,
$$
X = 
\bigcap_{i = 1}^{r} \bigcup_{\sigma \in \Gamma_{i}} \ G_{i}^{\sigma} =
\bigcup_{a \in \Gamma} \bigcap_{i = 1}^{r} G_{i}^{a(i)},
$$
with $a = (a(1), \dots a(r)) \in \Gamma$. Define
$$
Y^{(a)} = \bigcap_{i = 1}^{r} G_{i}^{a(i)}.
$$

\begin{proposition}
\label{GenBasic}
The subset $Y^{(a)}$ is a s.t. complete intersection of dimension $m$, and its smallest field of definition is $k_{0}$. We have
\begin{equation}
\label{GBA1}
X = \bigcup_{a \in \Gamma} Y^{(a)}.
\end{equation}
For every $a \in \Gamma$, there is a subset $\mathsf{Z}(a) \subset \mathsf{Z}$ such that
$$
Y^{(a)} = \bigcup_{Z \in \mathsf{Z}(a)} Z. 
$$
\end{proposition}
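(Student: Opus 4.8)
The plan is to establish the four assertions by reducing each to the corresponding step in the proof of Proposition \ref{coarseci}, applied with $g_i$ replaced by its conjugate $g_i^{a(i)}$. The identity \eqref{GBA1} requires no new argument: it is exactly the distributivity relation displayed just before the statement, $X = \bigcap_i \bigcup_{\sigma \in \Gamma_i} G_i^\sigma = \bigcup_{a \in \Gamma} \bigcap_i G_i^{a(i)}$, so the only content to prove concerns each individual $Y^{(a)}$.

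First I would treat the complete-intersection and dimension claims. For every $i$ the polynomial $g_i^{a(i)}$ is absolutely irreducible over $k_i$, since conjugation by $a(i) \in \Gamma_i$ preserves both absolute irreducibility and the field $k_i$; thus $Y^{(a)} = V(g_1^{a(1)}, \dots, g_r^{a(r)})$ is cut out by $r$ hypersurfaces. The Generalized Principal Ideal Theorem gives $\dim Y^{(a)} \geq n - r = m$, with every component of dimension $\geq m$. On the other hand $G_i^{a(i)} \subset H_i$ for each $i$, so $Y^{(a)} \subset \bigcap_i H_i = X$ and $\dim Y^{(a)} \leq \dim X = m$. Hence $\dim Y^{(a)} = m$ and $Y^{(a)}$ is a set-theoretic complete intersection, exactly as in Proposition \ref{coarseci}\eqref{BCI1}; being such, it is equidimensional of dimension $m$.

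Next the component structure. Since $Y^{(a)}$ is equidimensional of dimension $m$ and is contained in $X$, which is itself equidimensional of dimension $m$ with set of absolutely irreducible components $\mathsf{Z}$, every absolutely irreducible component $W$ of $Y^{(a)}$ is an irreducible subset of $X$ of dimension $m$. As $W$ is contained in the union $\bigcup_{Z \in \mathsf{Z}} Z$ of closed sets, it lies in a single $Z \in \mathsf{Z}$; since $Z$ also has dimension $m$ and is irreducible, $W = Z$. Taking $\mathsf{Z}(a)$ to be the set of components of $Y^{(a)}$ yields $\mathsf{Z}(a) \subset \mathsf{Z}$ and $Y^{(a)} = \bigcup_{Z \in \mathsf{Z}(a)} Z$, reproducing Proposition \ref{coarseci}\eqref{BCI2}.

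Finally the field-of-definition claim, which I expect to be the only delicate point. That $Y^{(a)}$ is defined over $k_0$ is immediate, each $g_i^{a(i)}$ having coefficients in $k_i \subset k_0$. For minimality I would argue by Galois descent, as for $Y = Y^{(\mathrm{id})}$ in the discussion preceding Proposition \ref{coarseci}: the smallest field of definition is the fixed field of $\mathrm{Stab}(Y^{(a)}) = \{\tau \in \Gal(K/k) : (Y^{(a)})^\tau = Y^{(a)}\}$, which contains $\Gal(K/k_0)$. Since the $s_i$ conjugates $G_i^\sigma$ are pairwise distinct (Theorem \ref{DecComp}\eqref{DCO4}), a $\tau$ with $\tau|_{k_0} \neq \mathrm{id}$ satisfies $\tau|_{k_{i_0}} \neq \mathrm{id}$ for some $i_0$ and carries $Y^{(a)}$ to $Y^{(b)}$ with $b = \tau \cdot a$ differing from $a$ in the coordinate $i_0$. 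The crux, and the main obstacle, is to exclude an accidental coincidence $Y^{(b)} = Y^{(a)}$ with $b \neq a$: a pure dimension count is inconclusive once $r \geq 2$, because such a coincidence would only force $Y^{(a)}$ into the $(r+1)$-fold intersection $G_{i_0}^{a(i_0)} \cap G_{i_0}^{b(i_0)} \cap \bigcap_{j \neq i_0} G_j^{a(j)}$, whose components have dimension $\geq m-1$ but need not drop below $m$ without a properness input. I would close the gap by using the complete-intersection structure to show that this intersection of distinct hypersurfaces is proper, hence of dimension $m-1 < m$, contradicting $\dim Y^{(a)} = m$; this gives $\mathrm{Stab}(Y^{(a)}) = \Gal(K/k_0)$ and identifies the smallest field of definition as $k_0$.
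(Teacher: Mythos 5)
Your treatment of \eqref{GBA1} (pure distributivity), of the dimension and set-theoretic complete intersection claim (Generalized Principal Ideal Theorem for $\dim Y^{(a)} \geq m$, the inclusion $Y^{(a)} \subset X$ for $\dim Y^{(a)} \leq m$), and of the component decomposition (equidimensionality of $Y^{(a)}$ and of $X$ forcing each component of $Y^{(a)}$ to be an element of $\mathsf{Z}$) is exactly the paper's argument: the paper prints no separate proof of Proposition \ref{GenBasic}, and these three points are the proof of Proposition \ref{coarseci}\eqref{BCI1}--\eqref{BCI2} transported verbatim to $Y^{(a)}$. Note also that the paper never proves the field-of-definition assertion either; it is simply asserted in the paragraph preceding Proposition \ref{coarseci}.

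You are right that the field of definition is the delicate point, and right that everything reduces to excluding $Y^{(a)} = Y^{(b)}$ for $b = \tau\cdot a \neq a$. But the step you defer --- that the $(r+1)$-fold intersection $G_{i_0}^{a(i_0)} \cap G_{i_0}^{b(i_0)} \cap \bigcap_{j \neq i_0} G_j^{a(j)}$ is proper, hence of dimension $m-1$ --- is false in general, so the gap cannot be closed along these lines. Take $r = 2$, $n = 4$, $k = \Fq$ with $q$ odd, $k_1 = k_2 = k_0 = \FF_{q^2} = k(\sqrt{\delta})$ with conjugation $\sigma$; put $A = T_0T_1 - T_2T_3$, $B = T_4^2$, $g_1 = A + \sqrt{\delta}\,B$, and $g_2 = g_1^{\sigma} - \lambda g_1$ with $\lambda \in \FF_{q^2}^{\times}$, $\lambda \neq \pm 1$, $N_{k_0/k}(\lambda) \neq 1$. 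Both $g_i$ are quadrics of rank $5$, hence absolutely irreducible, and neither is proportional to its conjugate, so $f_1 = g_1g_1^{\sigma}$ and $f_2 = g_2g_2^{\sigma}$ are $k$-irreducible of degree $4$ with $s_1 = s_2 = 2$. Since $(g_1, g_2) = (g_1, g_1^{\sigma}) = (A,B)$, every one of the four sets $Y^{(a)}$ equals $V(A, T_4)$, so $X = V(f_1,f_2) = V(A,T_4)$ has dimension $2 = n - r$ and all hypotheses of the proposition are met; yet the intersection you need to be proper has dimension $m$, the coarse decomposition is completely degenerate, and the smallest field of definition of $Y^{(a)}$ is $\Fq$, not $k_0 = \FF_{q^2}$. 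So the properness input you invoke does not exist, and the field-of-definition assertion itself requires extra hypotheses (compare Proposition \ref{IrrIntComp}, where irredundancy is only derived under the additional assumption that $X$ and $Y$ are ideal-theoretic complete intersections). The rest of your proof stands.
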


The covering \eqref{GBA1} of $X$ is called the \emph{coarse decomposition} of $X$.

If $X$ is a $k$-irreducible complete intersection, it can be interesting to compare the coarse decomposition of $X$ with 
its decomposition in irreducible components detailed in Theorem \ref{DecComp}. This can be performed under suitable conditions; 
for instance, we have the following result.

\begin{proposition}
\label{IrrIntComp}
With the preceding notation, assume that $X$ is a $k$-irredu\-cible i.t. complete intersection over $k$, 
and that $Y$ is an i.t. complete intersection with field of definition $k_{0}$. Let $s = s_{1} \dots s_{r}$ as above, and $k'$ 
the smallest field of definition of the irreducible components of $X$. Then:
\begin{enumerate}[(i)]
\item
\label{IIC1}
The family $\{\mathsf{Z}(a)\}_{a \in \Gamma}$ is a partition of $\mathsf{Z}$ into $s$ subsets with $s'/s$ elements, and $k_{0} \subset k'$.
\item
\label{IIC2}
We have
$$\deg Y = e_{1} \dots e_{r} = \frac{\deg X}{s}.$$
\item
\label{IIC3}
The coarse decomposition of $X$ is irredundant.
\end{enumerate}
\end{proposition}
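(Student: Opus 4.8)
The plan is to translate the whole statement into bookkeeping on the finite set $\mathsf{Z}$ of absolutely irreducible components of $X$, on which $G = \Gal(k'/k)$ acts simply transitively with $\card{\mathsf{Z}} = s'$ by Theorem \ref{DecComp}. Write $\delta = \deg Z$ for the common degree of the components. Then Theorem \ref{DecComp}\eqref{DCO9} together with Bézout (Theorem \ref{bezout2}, applied to the i.t. complete intersection $X$) give $s'\delta = \deg X = d_{1}\cdots d_{r}$, and from $e_{i} = d_{i}/s_{i}$ and $s = s_{1}\cdots s_{r}$ one gets $e_{1}\cdots e_{r} = (d_{1}\cdots d_{r})/s = \delta\, s'/s$. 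These identities are the only arithmetic inputs needed below.

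I would first dispose of \eqref{IIC2}. Since $Y = V(g_{1}, \dots, g_{r})$ is assumed to be an i.t. complete intersection cut out by polynomials of degrees $e_{1}, \dots, e_{r}$, its multidegree is $(e_{1}, \dots, e_{r})$, so Theorem \ref{bezout2} yields $\deg Y = e_{1}\cdots e_{r}$; combined with the identities above this is exactly $\deg Y = \deg X/s$.

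The heart of the argument is the partition in \eqref{IIC1}, which I would obtain by a degree-counting squeeze. For each $a \in \Gamma$ the set $Y^{(a)}$ is, by Proposition \ref{GenBasic}, a set-theoretic complete intersection equidimensional of dimension $m$ whose components $\mathsf{Z}(a)$ lie in $\mathsf{Z}$ and hence each have degree $\delta$, so $\card{\mathsf{Z}(a)} = \deg Y^{(a)}/\delta$. Since $Y^{(a)} = V(g_{1}^{a(1)}, \dots, g_{r}^{a(r)})$ with $\deg g_{i}^{a(i)} = e_{i}$, inequality \eqref{bezouthyp} together with equidimensionality gives $\deg Y^{(a)} = \cumdeg Y^{(a)} \leq e_{1}\cdots e_{r} = \delta\, s'/s$, whence $\card{\mathsf{Z}(a)} \leq s'/s$ for every $a$. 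Summing over the $s = \card{\Gamma}$ indices, $\sum_{a \in \Gamma} \card{\mathsf{Z}(a)} \leq s'$; but the coarse decomposition \eqref{GBA1} forces $\bigcup_{a}\mathsf{Z}(a) = \mathsf{Z}$, so the sum is at least $\card{\mathsf{Z}} = s'$. Equality throughout then forces each $\card{\mathsf{Z}(a)} = s'/s$ and the sets $\mathsf{Z}(a)$ to be pairwise disjoint, i.e. to form a partition of $\mathsf{Z}$ into $s$ blocks of $s'/s$ elements (in particular $s \mid s'$). For the inclusion $k_{0} \subset k'$, I would note that every $Z \in \mathsf{Z}(0)$ has $k'$ as its smallest field of definition (Theorem \ref{DecIdeal}\eqref{DCO1}), so the radical ideal $I(Y) = \bigcap_{Z \in \mathsf{Z}(0)} I(Z)$ is an intersection of ideals defined over $k'$ and hence $Y$ is defined over $k'$; as $k_{0}$ is the smallest field of definition of $Y$ (Proposition \ref{coarseci}\eqref{BCI1}), it follows that $k_{0} \subset k'$.

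Finally, \eqref{IIC3} is immediate from \eqref{IIC1}: since the blocks $\mathsf{Z}(a)$ are nonempty and pairwise disjoint, for each $a$ one may pick $Z \in \mathsf{Z}(a)$, and this top-dimensional component cannot lie in any $Y^{(b)}$ with $b \neq a$ (the components of $Y^{(b)}$ are exactly $\mathsf{Z}(b) \not\ni Z$); hence no $Y^{(a)}$ is contained in the union of the others, which is precisely irredundancy. The main obstacle to watch is the squeeze in \eqref{IIC1}: it hinges on knowing that every component occurring in any $Y^{(a)}$ is a full $m$-dimensional component of $X$ of degree exactly $\delta$, so that $\card{\mathsf{Z}(a)}$ and $\deg Y^{(a)}$ are proportional. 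This rests on the equidimensionality of $X$ and of each $Y^{(a)}$ coming from Theorem \ref{DecComp} and Proposition \ref{GenBasic}; once it is in place, the counting is forced and \eqref{IIC2} can even be re-read off as $\deg Y = \card{\mathsf{Z}(0)}\,\delta = (s'/s)\delta = \deg X/s$.
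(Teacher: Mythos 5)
Your proof is correct, and for part (i) it takes a route that differs from the paper's in one substantive respect. The paper gets $\card{\mathsf{Z}(a)} = s'/s$ by applying the exact B\'ezout formula of Theorem \ref{bezout2} to \emph{each} $Y^{(a)}$, writing $\deg Y^{(a)} = e_{1}\cdots e_{r}$ and comparing with $\deg Y^{(a)} = \card{\mathsf{Z}(a)}\deg Z$ and $\deg X = s'\deg Z$. That step presupposes that every $Y^{(a)}$, not only $Y = Y^{(0)}$, is an i.t.\ complete intersection of multidegree $(e_{1},\dots,e_{r})$; the hypothesis only asserts this for $Y$ itself, and for a general $a \in \Gamma$ the set $Y^{(a)}$ is a Galois conjugate of $Y$ only when $a$ comes from $\Gal(k_{0}/k)$, i.e.\ always only if $s_{0}=s$. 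Your squeeze replaces the per-piece equality by the inequality $\deg Y^{(a)} = \cumdeg Y^{(a)} \leq e_{1}\cdots e_{r} = \delta s'/s$ from \eqref{bezouthyp}, which holds for every $a$ with no radicality assumption, and then recovers all the equalities at once from $\sum_{a}\card{\mathsf{Z}(a)} \geq \card{\mathsf{Z}} = s'$; pairwise disjointness of the blocks drops out of the equality case of the same count, whereas the paper leaves it implicit. So your argument is a little longer but strictly more robust, and it proves exactly what is claimed under the stated hypotheses. The remaining pieces --- the identity $e_{1}\cdots e_{r} = \deg X/s$ via Theorem \ref{bezout2} applied to $X$ and to $Y$, the deduction of $k_{0}\subset k'$ from the fact that $Y$ is a union of components whose smallest field of definition is $k'$, and irredundancy read off from disjointness of the blocks --- coincide with the paper's. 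The only detail worth making explicit is that each block is nonempty because $Y^{(a)}$ is a nonempty s.t.\ complete intersection of dimension $m \geq 0$ by Proposition \ref{GenBasic}, so that $s'/s \geq 1$; you use this tacitly in part (iii).
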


\begin{proof}
The subset $Y^{(a)}$ is defined over $k'$ by \eqref{GBA1}, hence, $k_{0} \subset k'$. If we choose $Z \in \mathsf{Z}$, then
$$
\deg Y^{(a)} = \card{\mathsf{Z}(a)} \deg Z,
$$
since all the elements of $\mathsf{Z}$ have the same degree by Theorem \ref{DecComp}. By Theorem \ref{bezout2}, we have
$$
\deg Y^{(a)} = e_{1} \dots e_{r}, \quad \deg X = d_{1} \dots d_{r} = s \deg Y^{(a)},
$$
hence,
$$
\deg X = s \deg Y^{(a)} = s \, \card{\mathsf{Z}(a)} \deg Z.
$$
and this proves \eqref{IIC2}. Since $\card{\mathsf{Z}} = s' = [k':k]$ and $\deg X = s' \deg Z$, we see that $s' = c s$. 
This proves \eqref{IIC1}, from which we see that no $Y^{(a)}$ can be dropped in the coarse decomposition, whence \eqref{IIC3}.
\end{proof}

If $Y$ is irreducible, \idest, $c = 1$, then the coarse decomposition is identical to 
the decomposition of $X$ in irreducible components. Otherwise the covering of $X$ by 
irreducible components refines the coarse one.

\begin{remark}
If $k = \Fq$ and $c > 1$, the bound 
$$
\card{X(k)} \leq  \frac{\deg X}{s'} \, \pi_{m}
$$
of Corollary \ref{relirrbound} is better than the bound
$$
\card{X(k)} \leq  \frac{\deg X}{s} \, \pi_{m}
$$
of Proposition \ref{coarseci}\eqref{BCI5}. If $c = 1$, they are identical.
\end{remark}

\begin{remark}
If the polynomials defining $X$ are $K$-irreducible, Proposition \ref{coarseci} does not bring anything new.
But if at least one of these polynomials is \emph{relatively irreducible}, that is, $\Fq$-irreducible but not
$K$-irreducible, then some of the $c_i$ are $\geq 2$, and the bound of
Proposition \ref{coarseci} is better than the one of Corollary \ref{cibound}.
\end{remark}

\section{Tubular sets}
\label{sec_tu}

We give in this section examples of algebraic sets with many points. 
The following construction generalizes a construction of Serre 
\cite{serr}, corresponding to the case of codimension $r = 1$. 
Take a sequence $d_{1}, \dots, d_{r}$ 
of integers $\geq 1$, and choose a family
$$
a_{i,j} \in \Fq, \quad i \in \{1, \dots r \}, \ j \in \{1, \dots, d_{i} \},
$$
where we assume, for every $i$, that $a_{i,j} \neq a_{i,k}$ if $j \neq k$. 
Note that this condition implies $d_{i} \leq q$ for $1 \leq i \leq r$. 
Denote by $\mfa_{i,j}$ the principal ideal 
$(T_{i} - a_{i,j} T_{0})$ of $\Fq[T_{0}, \dots, T_{n}]$, 
and by $H_{i,j} = V(\mfa_{i,j})$ the corresponding hyperplane of $\PP^{n}$. Let 
$$
D = \prod_{i = 1}^{r} \{1, \dots, d_{i} \}.
$$
If $J = (j(1), \dots, j(r)) \in D$, we put
$$
\mfp_{J} =
(T_{1} - a_{1, j(1)} T_{0}, T_{2} - a_{2, j(2)} T_{0}, \dots, T_{r} - a_{r, j(r)} T_{0}),
\quad Y_{J} = V(\mfp_{J}).
$$
We have
$$
\mfp_{J} = \sum_{i = 1}^{r} \mfa_{i,j(i)}, \qquad 
Y_{J} = \bigcap_{i = 1}^{r} H_{i,j(i)}.
$$
The projective linear variety $Y_{J}$ is defined by $r$ linearly independent forms, and hence, 
of dimension $n - r$. Observe that if $J$ and $K$ are in $D$ and if $J \neq K$, 
then $j(i) \neq k(i)$ for some $i$, hence $a_{i,j(i)} \neq a_{i,k(i)}$, and 
$H_{i,j(i)} \cap H_{i,k(i)}$ is the linear variety $X_{0}$ of dimension $n - r - 1$ with ideal
$$
\mfp_{0} = (T_{0}, T_{1}, T_{2}, \dots, T_{r}).
$$
This proves that $Y_{J} \cap Y_{K} = X_{0}$. The \emph{tubular set} $X$ defined 
by the family  $(a_{i,j})$ is the algebraic subset of $\PP^{n}$ which is the union 
of the varieties $Y_{J}$ (if $n = 3$, $r = 2$, the subsets $Y_{J}$ can be see 
as ``tubes'', whence the name ``tubular set''). If $\mfa$ is the ideal of $X$, then
\begin{equation}
\label{disjsum}
\mfa = \bigcap_{J \in D} \mfp_{J}, \quad \text{and} \quad
X = V(\mfa) = \bigcup_{J \in D} Y_{J}.
\end{equation}
The irreducible components of $X$ are the linear varieties $Y_{J}$, and $\dim X = n - r$. 
By distributivity of union over intersection,
$$
\mfa =
\bigcap_{j \in D} \left(\sum_{i = 1}^{r} \mfa_{i,j(i)} \right) =
\sum_{i = 1}^{r} \left(\bigcap_{j=1}^{d_{i}} \mfa_{i,j} \right).
$$
Similarly,
$$
X = 
\bigcup_{J \in D} \left( \bigcap_{i = 1}^{r} H_{i,j(i)} \right) =
\bigcap_{i = 1}^{r} \left( \bigcup_{j = 1}^{d_{i}} H_{i,j} \right)
$$
For $1 \leq i \leq r$, the ideal
$$
\mfa_{i} = \bigcap_{j=1}^{d_{i}} \mfa_{i,j}
$$
is principal, since $\mfa_{i} = (f_{i})$, where
$$
f_{i}(T_{0}, T_{1},\cdots,T_{n}) = \prod_{j=1}^{d_{i}} (T_{i} - a_{i,j} T_{0}),
$$
and $X_{i} = V(\mfa_{i})$ is an hypersurface of degree $d_{i}$: 
$$X_{i} = \bigcup_{j = 1}^{d_{i}} H_{i,j}.$$
Now
$\mfa = \mfa_{1} + \dots + \mfa_{r}$, and
$$
X = V(\mfa) = \bigcap_{i = 1}^{r} X_{i}.
$$
Then $X$ is an i.t. complete intersection, and
$$\deg X = \sum_{J \in D} \deg Y_{J} = \card{D} = \prod_{i = 1}^{r} d_{i}.$$

We write $\Aff^{n} = \PP^{n} \setminus H_{0}$, where $H_{0}$ is the hyperplane $T_{0} = 0$. 
The linear variety $Y_{J}$ is the disjoint union of the affine variety 
$Y_{J}' = Y_{J} \cap \Aff^{n}$ and of $X_{0} = Y_{J} \cap H_{0}$. 
The tubular set $X$ and its affine part $X' = X \cap \Aff^{n}$ are obtained as disjoint unions
\begin{equation}
\label{affineproj}
X = X_{0} \amalg X' \subset \PP^{n}, \quad
X'= \coprod_{J \in D} Y_{J}' \subset \Aff^{n}.
\end{equation}

The enumeration of points of a tubular set is as follows:

\begin{theorem}
\label{nbptstub}
Let $(d_{1}, \dots, d_{r}) \in \NN^{r}$ with $1 \leq d_i \leq q$ for any $i$ in $\{1,\cdots,r\}$. 
The tubular set $X \subset \PP^{n}$ defined above is an i.t. complete intersection, 
defined over $\Fq$, of dimension $m = n - r$, multidegree $(d_{1}, \dots, d_{r})$, 
and degree  $d = d_{1} \cdots d_{r} \leq q^{r}$.
\begin{enumerate}[(i)]
\item
\label{NPF1}
The affine algebraic subset $X'$ satisfies
$$
\card{X'(\Fq)} = d q^{m}.
$$
\item
\label{NPF2}
The projective algebraic subset $X$ satisfies
$$
\card{X(\Fq)} = d q^{m} + \pi_{m - 1} = d \pi_{m} - (d - 1) \pi_{m - 1}.
$$
\end{enumerate}
\end{theorem}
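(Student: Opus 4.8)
The plan is to reduce everything to the disjoint-union decompositions \eqref{affineproj} already obtained, since the structural assertions of the first sentence (that $X$ is an i.t. complete intersection over $\Fq$, of dimension $m = n - r$, with multidegree $(d_{1}, \dots, d_{r})$ and degree $d = d_{1} \cdots d_{r}$) were established in the discussion preceding the statement. Thus only the two point-counts \eqref{NPF1} and \eqref{NPF2} remain, and each is a bookkeeping exercise on linear varieties over $\Fq$.

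For \eqref{NPF1}, I first analyze a single $Y_{J}$. Each $Y_{J}$ is a projective linear variety of dimension $m$, hence $Y_{J} \cong \PP^{m}$ over $\Fq$. Its intersection with the hyperplane at infinity is $Y_{J} \cap H_{0} = X_{0}$, a linear variety of dimension $m - 1$ (the common intersection of all the $Y_{J}$), which is therefore a hyperplane inside $Y_{J}$. Consequently the affine part $Y_{J}' = Y_{J} \setminus X_{0} = Y_{J} \cap \Aff^{n}$ is isomorphic over $\Fq$ to $\Aff^{m}$, so that $\card{Y_{J}'(\Fq)} = q^{m}$. Since distinct $Y_{J}, Y_{K}$ meet only in $X_{0} \subset H_{0}$, their affine parts are disjoint, and by \eqref{affineproj}
$$
\card{X'(\Fq)} = \sum_{J \in D} \card{Y_{J}'(\Fq)} = \card{D}\, q^{m} = d\, q^{m},
$$
which proves \eqref{NPF1}.

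For \eqref{NPF2}, I use the disjoint decomposition $X = X_{0} \amalg X'$ of \eqref{affineproj}. The set $X_{0}$ is a projective linear variety of dimension $m - 1$, so $\card{X_{0}(\Fq)} = \pi_{m - 1}$, and adding the count from \eqref{NPF1} gives $\card{X(\Fq)} = d\, q^{m} + \pi_{m - 1}$. The equivalent expression follows from $\pi_{m} = q^{m} + \pi_{m - 1}$:
$$
d\, \pi_{m} - (d - 1)\pi_{m - 1} = d(q^{m} + \pi_{m - 1}) - (d - 1)\pi_{m - 1} = d\, q^{m} + \pi_{m - 1}.
$$
There is no genuine obstacle here: all the geometric content — the explicit description of the components $Y_{J}$, their common pairwise intersection $X_{0}$, and the splitting into affine and infinite parts — has been front-loaded into the construction. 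The only points requiring care are checking that $Y_{J}'$ is genuinely an affine $m$-space (equivalently, that $X_{0}$ is a hyperplane section of each $Y_{J}$) and that the affine pieces are pairwise disjoint, both of which are immediate from the hypothesis $a_{i,j} \neq a_{i,k}$ for $j \neq k$ together with the inclusion $X_{0} \subset H_{0}$.
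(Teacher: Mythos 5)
Your proposal is correct and follows essentially the same route as the paper: both proofs take the structural claims as already established in the preceding construction and reduce \eqref{NPF1} and \eqref{NPF2} to the disjoint decompositions \eqref{affineproj}, counting $q^{m}$ points on each affine piece $Y_{J}'$ and $\pi_{m-1}$ points on $X_{0}$. Your write-up merely spells out the details (why $Y_{J}' \cong \Aff^{m}$, why the affine pieces are disjoint, and the identity $d\,\pi_{m} - (d-1)\pi_{m-1} = d\,q^{m} + \pi_{m-1}$) that the paper leaves implicit.
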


\begin{proof}
We only have to prove \eqref{NPF1} and \eqref{NPF2}. Apply \eqref{affineproj}: 
since $\card{Y_{J}'(\Fq)} = q^{m}$ for any  $J \in D$, we get \eqref{NPF1}, and we prove 
\eqref{NPF2} by observing that $\card{X_{0}(\Fq)} = \pi_{m - 1}$.
\end{proof}

Theorem \ref{nbptstub}\eqref{NPF1} shows that the bound of Corollary \ref{cibound} 
is attained in the affine case.
The projective case is different: we do not know any examples of i.t. 
complete intersections in $\PP^{n}$ reaching the bound of Corollary \ref{cibound}. Hence, we ask:

\begin{question}
What is the value of 
$$M_{m,d}(q) = \max_{X} \ \card{X(\Fq)},$$
when $X$ runs over projective i.t. complete intersections of dim. $m$ and degree $d$?
\end{question}
By the preceding, we know that
$$
d q^{n} + \pi_{n - 1} \leq M_{m,d}(q) \leq d q^{n} + d \pi_{n - 1}.
$$

In small codimension, the first author put forward in \cite[Conj. 12.2]{ghla} :

\begin{conjecture}
\label{LachaudConj}
If $X \subset \PP^{n}$ is a projective algebraic set defined
over $\Fq$ of dimension $m \geq n/2$ and degree $d \leq q + 1$ which is
a i.t. complete intersection, then
$$
\card{X(\Fq)} \leq d \pi_{m} - (d - 1)\pi_{2m - n}
= d(\pi_{m} - \pi_{2m - n}) + \pi_{2m - n}.
$$
\end{conjecture}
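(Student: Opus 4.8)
Since the conjecture refines the trivial estimate $\card{X(\Fq)} \le d\,\pi_m$ of Theorem \ref{genbound} (with $\cumdeg X = \deg X = d$ for a complete intersection, by Theorem \ref{bezout2}) by the quantity $(d-1)\pi_{2m-n}$, the plan is first to isolate the geometric meaning of this correction and then to realise it through a pencil of hyperplanes. Writing the bound as $\pi_{2m-n} + d(\pi_m - \pi_{2m-n})$, one recognises the number of $\Fq$-points of a union of $d$ linear $m$-planes sharing a common $(2m-n)$-plane; because $m \ge n/2$, any two subvarieties of dimension $m$ in $\PP^n$ must meet in dimension at least $2m-n$, so this configuration is the forced extremal one and the term $\pi_{2m-n}$ is genuinely unavoidable. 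I would begin by reducing to the case where $X$ is nondegenerate via Lemma \ref{nondegenerate}, which preserves the degree and can only increase the left-hand side.

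The core of the argument would be an induction on $n$. If $r=1$ then $X$ is a hypersurface and the claim is the projective bound of Serre and S\o rensen \cite{serr,sore2}, equal to $d\,\pi_{n-1} - (d-1)\pi_{n-2}$, which matches the statement since $2(n-1)-n=n-2$; so assume $r \ge 2$ and that the result holds in $\PP^{n-1}$. For the step I would fix an axis $\Lambda \subset \PP^n$ of dimension $n-2$ and let $H_0, \dots, H_q$ be the $q+1$ hyperplanes through $\Lambda$; here the hypothesis $d \le q+1$ guarantees that the pencil has at least $d$ members, as demanded by the extremal picture above. Since every $\Fq$-point off $\Lambda$ lies on exactly one $H_j$ while every point of $\Lambda$ lies on all of them, one has the exact identity
$$
\card{X(\Fq)} = \sum_{j=0}^{q} \card{(X \cap H_j)(\Fq)} - q\,\card{(X \cap \Lambda)(\Fq)}.
$$
For each $j$ such that $H_j$ meets $X$ properly, $X \cap H_j$ is a complete intersection in $H_j \cong \PP^{n-1}$ of dimension $m-1$, degree $d$ and codimension $r$, with $m-1 \ge (n-1)/2$ (the boundary $m=n/2$, where the descent halts, being a separate base case), so the inductive hypothesis bounds each such term by $d\,\pi_{m-1} - (d-1)\pi_{2m-n-1}$.

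Substituting these upper bounds into the identity and simplifying by means of $\pi_l = q\,\pi_{l-1}+1$ reduces the entire inequality, after cancellation, to a single \emph{lower} bound, namely $\card{(X \cap \Lambda)(\Fq)} \ge q^{-1}\bigl(d\,\pi_{m-1} - (d-1)\pi_{2m-n-1} - 1\bigr)$. This is the heart of the matter: the subtracted term $(d-1)\pi_{2m-n}$ in the conjecture corresponds precisely to the requirement that a suitable axis $\Lambda$ carry a near-maximal number of rational points of $X$. I would attempt to produce such a $\Lambda$ by averaging: the mean of $\card{(X \cap \Lambda)(\Fq)}$ over all $(n-2)$-planes $\Lambda$ equals $\card{X(\Fq)}$ times the proportion $\pi_{n-2}/\pi_n$ of such planes through a fixed point, which is of exact order $q^{-2}$, so the required lower bound sits precisely at the average value and should be met by an above-average choice of $\Lambda$.

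The hard part — and the reason the statement is posed as a conjecture rather than a theorem — is that this averaging is exactly at threshold and leaves no slack: to convert the average into a guaranteed bound for the optimal axis one must simultaneously control the degenerate members of the pencil (those $H_j$ containing a component of $X$, for which the sectional bound fails and which must be counted separately) and the degenerate axes (those $\Lambda$ for which $X \cap \Lambda$ jumps in dimension). Bounding the number of such exceptional $H_j$ by the Bézout-type estimates of Theorems \ref{bezout1} and \ref{bezout2}, and showing that their excess contribution is absorbed into the deficit, is where I expect the genuine difficulty to lie; it is also where the hypotheses $d \le q+1$ and $m \ge n/2$ must be used in full strength, and I would not expect the argument to close using only the tools of the preceding sections without an additional, more refined incidence input or a sharpening of the extremal configuration analysis.
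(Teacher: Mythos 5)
You are attempting to prove a statement that the paper itself labels a conjecture and does not prove: the authors attribute the full proof to Couvreur (cited only as a letter, June 2014) and content themselves with verifying two special cases --- the hypersurface case $r=1$, which is Serre's inequality $\card{X(\Fq)} \leq dq^{m} + \pi_{m-1}$, and the case where $X$ is a union of $d$ linear varieties of dimension $m$, proved by induction on $d$ via inclusion--exclusion and the fact that $\dim(Y_i \cap Y_j) \geq 2m-n$. To your credit, several ingredients of your sketch are sound: the pencil identity $\card{X(\Fq)} = \sum_{j} \card{(X\cap H_j)(\Fq)} - q\,\card{(X\cap\Lambda)(\Fq)}$ is exact, the arithmetic reduction is correct (with $B = d\pi_m - (d-1)\pi_{2m-n}$ and $B' = d\pi_{m-1} - (d-1)\pi_{2m-n-1}$ one has $B = qB'+1$, so the inequality does reduce to $\card{(X\cap\Lambda)(\Fq)} \geq (B'-1)/q$), and your extremal configuration --- $d$ linear $m$-planes through a common $(2m-n)$-plane --- is exactly the tubular-type configuration underlying the paper's second verification. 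But your proposal is, by your own admission, a strategy that does not close, so the verdict must be that there is a genuine gap, not a proof.

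The gaps are concrete and each is fatal on its own. First, the inductive hypothesis cannot legitimately be applied to $X \cap H_j$: the conjecture's hypotheses include being an \emph{ideal-theoretic} complete intersection, and a hyperplane section of an i.t.\ complete intersection need not be one (radicality of the ideal $(f_1,\dots,f_r,h)$ can fail, and an $H_j$ containing a component of $X$ destroys both the dimension and the degree of the section); you would need all $q+1$ members of the pencil through the \emph{same} axis $\Lambda$ to cut $X$ properly, whereas your averaging runs over all axes including degenerate ones, so the two requirements on $\Lambda$ are in tension. Second, the averaging itself is exactly at threshold, as you note: if $\card{X(\Fq)}$ is near $B$, the mean of $\card{(X\cap\Lambda)(\Fq)}$ is $B\,\pi_{n-2}/\pi_n$, which agrees with the required $(B'-1)/q$ only to leading order, and the lower-order terms can go the wrong way --- an above-average axis therefore yields nothing, and no refinement within the paper's toolkit (Theorems \ref{bezout1}, \ref{bezout2}, Lemma \ref{nondegenerate}) supplies the missing slack. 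Third, the boundary case $m = n/2$, where $2m-n = 0$ and the descent $m-1 \geq (n-1)/2$ fails, is an entire family of nontrivial base cases that your scheme leaves open. What was actually within reach, and what the paper does, is the two partial verifications above; had you carried out at least the union-of-linear-varieties induction on $d$ (which needs only $\dim(Y_i\cap Y_j)\geq 2m-n$ and inclusion--exclusion), your text would have contained the same partial evidence the authors give, while the general statement genuinely required Couvreur's additional input beyond the methods of this paper.
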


Conjecture \ref{LachaudConj} has just been proved by Couvreur \cite{couvreur}. We shall be 
content here to specify that there are two cases where it is easy to verify this conjecture:

--- The conjecture is true if $X$ is of codimension $1$. This is \emph{Serre's inequality} \cite{serr}: if
$X$ is an hypersurface of dimension $m$ and of degree $d \leq q + 1$, then
$$\card{X(\Fq)} \leq d q^{m} + \pi_{m - 1}.$$
--- The conjecture is also true if $X$ is a union of linear varieties of the same dimension. 
Precisely, assume that $X$ is the union of $d$ linear varieties $Y_{1}, \dots, Y_{d}$ of 
dimension $m \geq n/2$. We prove the inequality by induction on
$d$. Write $Y_{i}(\Fq) = Y_{i} \ (1 \leq i \leq d)$ for brevity. If $d = 1$
then
$$\card{Y_{1}} = \pi_{m} = (\pi_{m} - \pi_{2n - m}) + \pi_{2n - m},$$
and the assertion is true. Now if $Y_{1}$ and $Y_{2}$ are two linear
varieties of dimension $m$, then $\dim Y_{1} \cap Y_{2} \geq 2m - n$. Hence, for $d > 1$,
$$
\card{Y_{d} \cap (Y_{1} \cup \dots \cup Y_{d - 1})} \geq \pi_{2m - n}.
$$
Now note that
$$
\card{Y_{1} \cup \dots \cup Y_{d}} = \card{Y_{1} \cup \dots \cup Y_{d - 1}}
+ \card{Y_{d}} - \card{Y_{d} \cap (Y_{1} \cup \dots \cup Y_{d - 1})}.
$$
If we apply the induction hypothesis we get
\begin{eqnarray*}
\card{Y_{1} \cup \dots \cup Y_{d}} & \leq &
(d - 1)(\pi_{m} - \pi_{2m - n}) + \pi_{2m - n} + \pi_{m} - \pi_{2m - n}\\
& = & d(\pi_{m} - \pi_{2m - n}) + \pi_{2m - n},
\end{eqnarray*}
which proves the desired inequality.




\end{document}